\documentclass[hidelinks]{article}         										
\usepackage[english,strings]{babel}     
\usepackage{amsmath}   
\usepackage[utf8]{inputenc}    
\usepackage{longtable}         
\usepackage{exscale}           
\usepackage[final]{graphicx}   
\usepackage[sort]{cite}        
\usepackage{array}             
\usepackage{fancyhdr}          
\usepackage[a4paper]{geometry} 
\usepackage{xspace}            
\usepackage{tikz-cd}              
\tikzcdset{scale cd/.style={every label/.append style={scale=#1}, 
		cells={nodes={scale=#1}}}}
\usepackage{ifdraft}           
\usepackage{nchairx} 
\usepackage[sectionbib         
]{chapterbib}       
\usepackage{appendix}
\usepackage[expansion=false    
]{microtype}        
\usepackage[nottoc]{tocbibind} 
\usepackage[backref=page,      
final=true,         
pdfpagelabels       
]{hyperref}         
\usepackage{color}
\usepackage{dsfont}
\usepackage{tikz}
\usepackage{CJKutf8}
\usepackage{bbold} 
\usepackage{yfonts}
\usepackage{amssymb}
\usepackage{epigraph}

\AtEndDocument{\bigskip{\footnotesize
		\noindent\textsc{Seoul National University, Department of Mathematical Sciences, 	Research institute in Mathematics, Gwanak-Gu, 
			Seoul 08826, South Korea} \par  
		\noindent    \textit{E-mail address}: \texttt{\href{mailto:kevin.ruck@snu.ac.kr}{kevin.ruck@snu.ac.kr}} \par
}}

\begin{document}
	\title{A Floer Theoretic Approach to Energy Eigenstates on one Dimensional Configuration Spaces}
	\author{Kevin Ruck} 
	\date{}
	
	\maketitle
	
	\begin{abstract}
		In this article we consider two classical problems in Quantum Mechanics, namely the 'particle on a ring' and the 'particle in a box', from the viewpoint of symplectic topology. Interpreting the solutions of the  corresponding time independent Schr\"odinger equation as orbits in a suitably chosen time dependent Hamiltonian system allows us to investigate them using Floer theory. More precisely we extend the definition of Rabinowitz Floer homology to non-autonomous Hamiltonians on $\mathbb{R}^{2n}$ with its standard symplectic structure and show that compactness of the moduli space of J-holomorphic curves still holds. With this homology we are then able to prove existence results for energy $E$ eigenstates on the 'ring' or in the 'box' for a big range of exterior potentials.
	\end{abstract}
	\medskip
	
	\noindent\textbf{Keywords: }  Quantum Mechanics  $\cdot$ Floer Homology $\cdot$ non-autonomous Hamiltonian
	
	\medskip
	
	\noindent\textbf{Mathematics Subject Classification:}  81Q35 $\cdot$ 	53D40 $\cdot$ 37J65
	
	\section{Introduction}
	Symplectic topology proved over the last decades its enormous usefulness in the study of classical mechanical systems, especially when it comes to existence results for specific types of solutions (e.g. periodic orbits, consecutive collision orbits, etc.). On the side of Quantum Mechanics the pioneering work of Maurice de Gosson (\cite{deGosson2007a}, \cite{deGosson2009a}, etc.) and Leonid Polterovich (\cite{polterovich2012a}, \cite{polterovich2014a}, etc.) showed that symplectic topology also has its use in the study of quantum phenomenon. In this paper we want to contribute to building a bridge between those two areas, in particular we want to explore how we can use Floer theory to prove existence results for specific solutions of the Schr\"odinger equation. In classical mechanics the fundamental solutions of the Hamiltonian equation are the periodic orbits, which are according to Poincar\'e the 'only gateway into the otherwise impenetrable domain of nonlinear dynamics'. The most fundamental solutions of the Schr\"odinger equation in quantum mechanics are in this sense the energy eigenstates.
	
	A well established tool in symplectic geometry to study the existence of periodic orbits in a Hamiltonian system is Rabinowitz Floer homology. So following the analogy made above, our goal in this paper is to explore if we can use (a variation of) Rabinowitz Floer homology to prove an existence statement for energy eigenstates in a given quantum mechanical system. 
	
	The two quantum systems we chose for this paper are called 'particle on a ring' and 'particle in a box'. As their names suggest they consist of a quantum particle whose configuration space is a circle or a closed interval potentially under the influence of an exterior field (for more details see Section~\ref{QM}). The reason why we chose these two systems is that they exhibit all the fundamental features of a quantum system while still being of rather low complexity. 
	
	So given a two dimensional plane with an exterior field which is described by a potential $\tilde{V}:\mathbb{R}^{2}\to \mathbb{R}$, i.e. it only depends on the position of the particle. In this plane we want to prepare a quantum state that is confined to a ring or a box and is an eigenstate for energy $E$.\vspace{2mm}
	
	\noindent\textbf{Question:} \textit{For a given potential $\tilde{V}$ and any given energy $E$ can we always find a radius $R$ such that we can realize an energy eigenstate with energy $E$ on a ring of radius $R$ (alt. a box of length $R$)?} \vspace{2mm}

	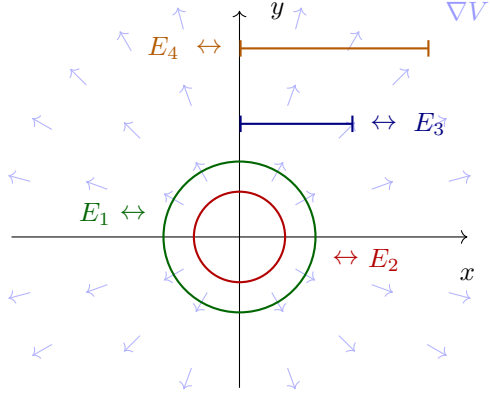
\begin{figure}
		\begin{center}
			\begin{tikzpicture}
				\draw[->] (-3,0)--(3,0);
				\draw[->] (0,-2)--(0,3);
				\draw (3,-0.5) node {$x$};
				\draw (0.5,3) node {$y$};
				\foreach \i/\j in {1/30,1/60,1/120,1/150,1/210,1/240,1/300,1/330,2/20,2/45,2/70,2/110,2/135,2/160,2/200,2/225,2/250,2/290,2/315,2/340,3/15,3/30,3/45,3/60,3/75,3/105,3/120,3/135,3/150,3/165,3/195,3/210,3/330,3/345} {
					\draw[blue!30!white,->] (\j:\i-0.15)--(\j:\i+0.15);
				}
				\draw[blue!40!white] (3,3) node {$\nabla V$};
				\draw[green!40!black,thick] (0,0) circle[radius=1cm];
				\draw[green!40!black] (-1.4,0.3) node {$\leftrightarrow$};
				\draw[green!40!black] (-1.9,0.3) node {$E_1$};
				\draw[orange!70!black,thick,|-|] (0,2.5)--(2.5,2.5);
				\draw[red!70!black] (1.4,-0.3) node {$\leftrightarrow$};
				\draw[red!70!black] (1.9,-0.3) node {$E_2$};
				\draw[blue!50!black,thick,|-|] (0,1.5) -- (1.5,1.5);
				\draw[blue!50!black] (1.9,1.5) node {$\leftrightarrow$};
				\draw[blue!50!black] (2.5,1.5) node {$E_3$};
				\draw[red!70!black,thick] (0, 0) circle[x radius=0.6, y radius=0.6];
				\draw[orange!70!black] (-0.4,2.5) node {$\leftrightarrow$};
				\draw[orange!70!black] (-1,2.5) node {$E_4$};
			\end{tikzpicture}
		\end{center}
		\caption{A sketch of two rings and two boxes in an exterior field $\nabla V$ such that there exists an energy $E_i$ eigenstate on them.}
		\label{setup}
	\end{figure}

	\noindent To solve this question we use the following approach: When confined to a circle of radius $R$, the Hamiltonian operator for the above described setting is given by the unbounded operator
	\begin{align*}
		\hat{H}&:\ L^2(S^1,\mathbb{C}) \supset\mathcal{U} \to L^2(S^1, \mathbb{C})\\
		\psi &\mapsto -\frac{1}{R^2}\frac{\mathrm{d}^2}{\mathrm{d}\varphi^2}\psi + V\cdot \psi,
	\end{align*}  
	where $\mathcal{U}$ is an appropriately chosen dense subspace and $V$ is the exterior potential on the plane restricted to the circle. Note that we chose the units in such a way that $\frac{\hbar^2}{2m}=1$. For $\psi$ to be an energy eigenstate it therefore has to satisfy the 2nd order ODE
	\begin{equation}
		\hat{H} \psi = -\frac{1}{R^2}\frac{\mathrm{d}^2}{\mathrm{d}\varphi^2}\psi + V\cdot \psi= E\cdot \psi. 
		\label{eigenstate}
	\end{equation}
	Now consider the Hamiltonian function 
	\begin{equation}
		H:\mathbb{R}\times\mathbb{R}^4\to \mathbb{R}; \qquad (t,q,p)\mapsto\frac{\|p\|^2}{2}+\left(E-V\left(\frac{t}{R}\right)\right)\frac{\|q\|^2}{2}-c,
		\label{mainH}
	\end{equation}
	where $V\left(\frac{t}{R}\right)$ extends to $\mathbb{R}$ as a periodic function and $c>0$ is a constant. The corresponding equation of motion is given by
	\begin{align*}
		\frac{\D}{\D t} q(t)&=p(t)\\
		\frac{\D}{\D t} p(t)&=-\left(E-V\left(\frac{t}{R}\right)\right)q(t).
	\end{align*}
	Assume  there exists a periodic solution $(Q(t),P(t))$ with period $R$. Then we define the function
	\[
		\psi_E(\varphi):= Q_1(R\varphi)+iQ_2(R\varphi) \in L^2\left(S^1,\mathbb{C}\right).
	\]
	This function satisfies
	\begin{align*}
		-\frac{1}{R^2}\frac{\mathrm{d}^2}{\mathrm{d}\varphi^2}\psi_E + V\cdot \psi_E&= \frac{\mathrm{d}^2}{\mathrm{d}t^2}\left(Q_1(t)+iQ_2(t)\right)+V\left(\frac{t}{R}\right)\cdot\left(Q_1(t)+iQ_2(t)\right)\\
		&=E\left(Q_1(t)+iQ_2(t)\right)\\
		&=E\psi_E(\varphi),
	\end{align*}
	where we used the substitution $\varphi=\frac{t}{R}$ for the first and third equality and the fact that $Q(t)$ is a solution to the above equation of motion for the second equality. Therefore $\psi_E$ is an energy $E$ eigenstate. This approach allows us to study the existence of energy eigenstates on a ring of radius $R$ by looking for $R$-periodic orbits for the Hamiltonian function \eqref{mainH}. 
	
	In symplectic topology there exists a variety of different homologies that are able to detect periodic orbits in a given Hamiltonian system. The one suitable for our goal is call Rabinowitz Floer homology. However, the original definition of this homology requires a well-defined energy hypersurface, i.e. an autonomous Hamiltonian function. Hence, our strategy will be to extend the definition of Rabinowitz Floer homology to non-autonomous Hamiltonians on $\mathbb{R}^{2n}$, which will enable us to prove the following existence result:
	\begin{theorem}
		Given a radially invariant potential $\tilde{V}:\mathbb{R}^2\setminus\{0\}\to \mathbb{R}$. Then for every energy $E$ with 
		\begin{align*}
			E>\max \tilde{V}
		\end{align*}
		there exists a radius $r_E\in \mathbb{R}^+$ such that on the circle $\del B_{r_E}(0)$ there exists an energy eigenstate for energy $E$ of the Hamiltonian operator $\hat{H}\psi=-\frac{1}{r_E^2}\frac{\mathrm{d}^2}{\mathrm{d}\varphi^2}\psi + V\cdot \psi$, where $V$ is the restriction of $\tilde{V}$ to $\del B_{r_E}(0)$
		\label{thmclosed}
	\end{theorem}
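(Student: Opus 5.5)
Since $\tilde V$ is radially invariant, its restriction to $\del B_r(0)$ is a constant, which I write as $v(r)$. So on each circle the potential $V$ is constant and the eigenvalue equation \eqref{eigenstate} becomes a constant-coefficient ODE. I will therefore argue directly, by an intermediate value argument on $r$, rather than through the Floer machinery. Afterwards I indicate how the same conclusion reads in the Hamiltonian picture \eqref{mainH}.

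\textbf{Reduction to a scalar equation.} On $\del B_r(0)$ equation \eqref{eigenstate} reads
\begin{align*}
-\frac{1}{r^2}\psi''(\varphi)=\bigl(E-v(r)\bigr)\psi(\varphi), \qquad\text{i.e.}\qquad \psi''=-f(r)\,\psi ,
\end{align*}
where $f(r):=r^2\bigl(E-v(r)\bigr)$. Its solutions are the linear combinations of $e^{\pm i\sqrt{f(r)}\varphi}$ (together with the affine functions if $f(r)=0$). Such a solution is periodic on the circle exactly when $\sqrt{f(r)}$ lies in the lattice $\kappa\mathbb{Z}$ determined by the period normalisation of $\varphi$: $\kappa=1$ for $\varphi\in[0,2\pi]$, and $\kappa=2\pi$ for a unit period as in the definition of $\psi_E$. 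The case $\sqrt{f(r)}=0$ would force $E=v(r)$, which is excluded because $E>\max\tilde V\ge v(r)$. Hence it suffices to find some $r_E>0$ with
\begin{align*}
f(r_E)=(\kappa k)^2 \quad\text{for some } k\in\mathbb{N}_{\ge 1}.
\end{align*}
Then $\psi_E(\varphi)=e^{i\kappa k\varphi}$ is the desired eigenstate. Equivalently, $(Q,P)$ is a uniformly rotating orbit of the harmonic oscillator \eqref{mainH}, now with constant frequency, which is $r_E$-periodic.

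\textbf{Intermediate value argument.} I assume $\tilde V$ is continuous, which is implicit in $\max\tilde V$ existing, so $f$ is continuous on $(0,\infty)$. Since $E-v(r)\ge E-\max\tilde V=:\delta>0$, we get $f(r)\ge \delta r^2$. In particular $f>0$ everywhere and $f(r)\to\infty$ as $r\to\infty$. The image $f((0,\infty))$ is connected, i.e.\ an interval, and it is unbounded above. So it contains $[f(1),\infty)$, and therefore contains $(\kappa k)^2$ for every sufficiently large $k$. Choosing such a $k$ and a preimage $r_E$ finishes the proof.

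\textbf{Where the care is needed.} The only delicate point is that $\tilde V$ lives on $\mathbb{R}^2\setminus\{0\}$ and may be unbounded below near the origin, for example a Coulomb-type potential. Then $f$ need not tend to $0$ as $r\to 0$, and one cannot aim for the lowest mode $k=1$. This is why I use the tail $r\to\infty$ and a large $k$ instead. The genuinely Floer-theoretic content of the paper, namely non-autonomous Rabinowitz Floer homology, is needed only for non-radial potentials, where $V(t/R)$ truly depends on $t$. In the present radial case I would remark that the orbit found above is the generator detected by that homology, but the proof itself does not need it.
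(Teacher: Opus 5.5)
Your proof rests on its first sentence, that $\tilde V$ is constant on each circle $\partial B_r(0)$. That reads ``radially invariant'' as ``rotationally symmetric'', and this is not the hypothesis of the theorem. In the paper, radial invariance means invariance along rays from the origin: $\tilde V(\lambda x)=\tilde V(x)$ for all $\lambda>0$, so $\tilde V$ depends only on the angle. This is why the origin is removed from the domain, and why $\max\tilde V$ automatically exists. The paper's proof uses exactly this property. It takes $V(t)$ to be the restriction of $\tilde V$ to the unit circle and then asserts that $V(t/\tau)$ is the restriction of $\tilde V$ to the circle of radius $\tau$. For a rotationally symmetric potential this fails in general, since the two restrictions are the different constants $v(1)$ and $v(\tau)$. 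Likewise, the proof of Theorem~\ref{thmopen} identifies $\tilde V$ on $\Gamma_{l_E}=l_E\hat\Gamma$ with $\tilde V$ on $\hat\Gamma$. Under the correct hypothesis every circle carries the same, generally non-constant, angular profile $g(\varphi):=\tilde V(\cos\varphi,\sin\varphi)$. The eigenvalue equation on $\partial B_r(0)$ then becomes the Hill equation
\begin{align*}
\psi''(\varphi)+r^2\bigl(E-g(\varphi)\bigr)\psi(\varphi)=0 .
\end{align*}
Its solutions are no longer exponentials, so there is no explicit quantization condition $f(r)\in(\kappa\mathbb{Z})^2$ for an intermediate value argument to act on. Your argument is correct, but only for the degenerate case $g\equiv\mathrm{const}$. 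That is precisely the case in which the Hamiltonian \eqref{mainH} is autonomous. Your own closing remark, that the Floer machinery becomes superfluous, should have signalled that the reading was off.

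The missing ingredient is an existence result for a positive periodic eigenvalue $\lambda=r_E^2$ of the weighted problem $-\psi''=\lambda w\psi$, where $w:=E-g\ge E-\max\tilde V>0$. The paper obtains it from the time-dependent Rabinowitz Floer homology of $H_t=\frac{\|p\|^2}{2}+(E-V(t))\frac{\|q\|^2}{2}-c$. This homology vanishes by a homotopy to the harmonic oscillator. On the other hand, without non-constant orbits of low Conley--Zehnder index it would equal the homology of the three-dimensional manifold of constant critical points. The index monotonicity of Appendix~\ref{otto}, together with the a priori bounds, keeps the detected orbit away from the cut-off, and ray invariance then converts its period $\tau$ into the radius.

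If you prefer an elementary route, you can repair your proof with Sturm--Liouville theory. Minimise $\int_{S^1}|\psi'|^2$ over periodic $H^1$ functions with $\int_{S^1}w\psi=0$ and $\int_{S^1}w\psi^2=1$. A minimiser exists by the compact embedding $H^1(S^1)\hookrightarrow L^2(S^1)$. Its value $\lambda_1$ is positive, because the linear constraint excludes constants. Testing the Euler--Lagrange equation with $h\equiv 1$ shows that the multiplier of the linear constraint vanishes, leaving $-\psi''=\lambda_1 w\psi$ with $\psi$ periodic. Then $r_E=\sqrt{\lambda_1}$ works.
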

	
	For the 'particle in the box' we can use a similar strategy: Consider a Hamiltonian trajectory $(Q(t),P(t))$ of \eqref{mainH} such that
	\begin{align*}
		(Q(t),P(t))\in \{0\}\times \mathbb{R}^2
	\end{align*}
	for $t=0$ and a later time $t=T$. Then again 
	\begin{align*}
		\psi(\varphi):= Q_1(\varphi)+iQ_2(\varphi)
	\end{align*}
	is an energy eigenstate to energy $E$ in the 'box' of length $T$. Since $\{0\}\times \mathbb{R}^2$ is a Lagrangian submanifold we can use a modified version of Lagrangian Rabinowitz Floer homology to prove the following theorem:
	\begin{theorem}
		Given a radially invariant potential $\tilde{V}:\mathbb{R}^2\setminus\{0\}\to \mathbb{R}$. Then for every energy $E$ with 
		\begin{align*}
			E>\max \tilde{V}
		\end{align*}
		there exists a length $l_E\in\mathbb{R}^+$ such that on the straight line from $(0,l_E)$ to $(l_E,l_E)$ in $\mathbb{R}^2$ (c.f.  Figure~2 in Chapter~\ref{openstring}), there exists an energy eigenstate for energy $E$ of the Hamiltonian operator $\hat{H}\psi=-\frac{\mathrm{d}^2}{\mathrm{d}s^2}\psi + V\cdot \psi$, where $V$ is the restriction of $\tilde{V}$ to the straight line.
		\label{thmopen}
	\end{theorem}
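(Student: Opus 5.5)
The plan is to reduce the statement to a one–parameter family of Dirichlet Sturm–Liouville problems and apply the intermediate value theorem to a suitable Dirichlet eigenvalue branch. The Lagrangian Rabinowitz Floer homology for \eqref{mainH} is the intended tool; I argue directly at the ODE level, and the Floer argument should be viewed as a way to repackage this count of Dirichlet eigenvalues.

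\textbf{Step 1: set up the family.} For $l>0$, parametrize the segment from $(0,l)$ to $(l,l)$ by $s\in[0,l]$. The restricted potential is then
\[
V_l(s)=\tilde V\bigl(\sqrt{s^2+l^2}\bigr),
\]
using radial invariance. The radius stays in $[l,\sqrt2\, l]$, so $V_l$ is continuous and bounded on $[0,l]$. As in the discussion before the theorem, an energy $E$ eigenstate in the box means a nonzero solution of $-\psi''+V_l\psi=E\psi$ with $\psi(0)=\psi(l)=0$; a real solution suffices. This is exactly a trajectory of \eqref{mainH} starting and ending on the Lagrangian $\{0\}\times\mathbb{R}^2$.

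Rescale $s=lx$ with $x\in[0,1]$. The problem becomes the Dirichlet problem for
\[
L_l=-l^{-2}\frac{\mathrm{d}^2}{\mathrm{d}x^2}+V_l(lx)\quad\text{on }[0,1].
\]
This operator has a simple discrete spectrum $\lambda_1(l)<\lambda_2(l)<\dots\to\infty$. I need to find some $k$ and some $l_E$ with $\lambda_k(l_E)=E$.

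\textbf{Step 2: continuity in $l$.} The coefficients of $L_l$ depend continuously on $l$ in sup norm on $[0,1]$, since $\tilde V$ is continuous away from the origin. Each simple eigenvalue $\lambda_k(l)$ is therefore continuous in $l$. One way to see this is via the min–max characterization with uniformly comparable quadratic forms on $H^1_0$. Another is the shooting/Prüfer angle: the $k$-th Dirichlet eigenvalue is where the Prüfer angle at $x=1$ equals $k\pi$, and that angle depends continuously on all parameters.

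\textbf{Step 3: bounds at the two ends and the intermediate value theorem.} Fix any $l_0>0$. Since $\lambda_k(l_0)\to\infty$, I can choose $k$ with $\lambda_k(l_0)>E$.

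For large $l$, I compare with the constant potential $\max\tilde V$ via min–max. This gives
\[
\lambda_k(l)\le \max\tilde V+\frac{k^2\pi^2}{l^2},
\]
which is $<E$ once $l$ is large, because $E>\max\tilde V$. The intermediate value theorem then gives $l_E$, between $l_0$ and this large $l$, with $\lambda_k(l_E)=E$. The corresponding eigenfunction is the desired state.

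\textbf{Main obstacle.} The delicate point is the regime of small $l$. There $\tilde V$ may be unbounded below near $0$, so $\lambda_1(l)$ alone need not ever exceed $E$. This is why I fix $l_0$ first and then raise the index $k$, rather than trying to control $l\to0$. Equivalently, in the Floer language one would need the Lagrangian Rabinowitz Floer homology to be nonzero for a chord count that is allowed to wind more times.

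The remaining verification is continuity of $\lambda_k$ in $l$, which is routine given simplicity of Dirichlet eigenvalues in one dimension.
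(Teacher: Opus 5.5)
Your proposal is correct, and it takes a genuinely different route from the paper. The paper never works with Dirichlet eigenvalues. It builds a non-autonomous Lagrangian Rabinowitz Floer homology for $H_t=\frac{\|p\|^2}{2}+(E-V(t))\frac{\|q\|^2}{2}-c$ with Lagrangian $\{0\}\times\mathbb{R}^2$, and shows it vanishes by homotopy to a cut-off harmonic oscillator. If there were no nonconstant low-index chord, this homology would equal the nonzero homology of the circle of constant critical points, a contradiction. The reflection $(q,p)\mapsto(-q,p)$ and monotonicity of the Conley--Zehnder index keep the resulting chord away from the cut-off, and the chord's multiplier $\tau$ becomes $l_E$. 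So your aside is slightly off: the Floer mechanism is the \emph{vanishing} of RFH set against the nonzero homology of the constant critical set, not a nonvanishing statement.

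Your argument (continuity of $\lambda_k(l)$, $\lambda_k(l_0)>E$ for large $k$, the min--max bound $\lambda_k(l)\le\max\tilde V+k^2\pi^2/l^2$, then the intermediate value theorem) is elementary and strictly more general. It only uses continuity of $(l,x)\mapsto\tilde V(lx,l)$ and $\sup\tilde V<E$, so it needs no symmetry of $\tilde V$ at all. It also yields infinitely many admissible lengths, one for each large $k$, with an eigenfunction having $k-1$ interior zeros. The Floer route proves nothing stronger here. Its value is methodological, a mechanism independent of Sturm--Liouville theory, paid for with the compactness, cut-off and index bookkeeping.

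One caveat on the hypothesis: in the paper ``radially invariant'' means invariant under dilations, $\tilde V(rz)=\tilde V(z)$ for $r>0$. This is exactly what gives $V(\cdot/l_E)=\tilde V|_{\Gamma_{l_E}}$ with $\hat\Gamma$ at height $1$, and the analogous step for the ring. It does not mean that $\tilde V$ depends only on $|z|$, so your formula $V_l(s)=\tilde V(\sqrt{s^2+l^2})$ is not the intended one. Nothing in your proof depends on that formula: write $V_l(s)=\tilde V(s,l)$ and every step goes through verbatim. Under the intended reading, $V_l(lx)=V(x)$ is independent of $l$, so $\lambda_1(l)\ge\min V+\pi^2/l^2\to\infty$ as $l\to 0$. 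Hence $k=1$ already works, and your ``main obstacle'' does not arise.
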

	
	\begin{remark}
		In the above theorem for clarity we chose to give a very explicit statement about the type of 'box' on which one can find an energy $E$ eigenstate. From the proof of this theorem in Chapter~\ref{openstring} one can see that we are free to choose any type of orientation of the line in $\mathbb{R}^2$ by adapting the positioning of the reference line $\hat{\Gamma}$.
	\end{remark}
	The outline of the paper is a follows: We start in Section~2 with a short introduction to the basic concepts in quantum mechanics and how they relate to their classical counterpart in Hamiltonian mechanics. 
	
	Section~3 is about extending the definition of Rabinowitz Floer homology to non-autonomous Hamiltonians. This will be done in two parts: First we consider the case of periodic orbits as generators of the homology as in the original definition in \cite{cieliebak2009a} and afterwards we discuss the case of chords starting and ending in a given Lagrangian submanifold as generators as introduced in \cite{merry2014a}. Originally the well-definedness of Rabinowitz Floer homology relies on the existence of a contact form on a given energy hypersurface, however, for a time dependent Hamiltonian there does not even exist an energy hypersurface. So instead of considering Hamiltonian trajectories that lie on a fixed energy hypersurface we look at orbits whose average energy is equal to a fixed value. Restricting our considerations to only $\mathbb{R}^{2n}$ with its standard symplectic structure as the symplectic manifold with 'nice enough' Hamiltonian functions on it allows us then to prove the compactness of the moduli space of gradient flow lines also in the absence of a contact energy hypersurface.
	
	In Section~4 we utilize the in Section~3 defined Rabinowitz Floer homology for non-autonomous Hamiltonians to prove Theorem~\ref{thmclosed} and Theorem~\ref{thmopen}). The key idea of these proofs is that in one dimensions we can identify the eigenstates for the Hamiltonian operator with the position component of the solution to the Hamiltonian equation of motion for a suitably chosen Hamiltonian function, as explained above. After showing that the RFH vanishes in these settings, the two theorems follow via the usually proof by contradiction: Assume there is no non-constant solution then RFH has to be equal to the singular homology of a closed (non empty) submanifold, which contradicts the vanishing of RFH.
	
	\subsection*{Acknowledgments}
	The author would like to thank Jungsoo Kang and Otto van Koert for helpful discussions. This work was supported by the National Research Foundation of Korea under grants RS-2023-00211186, (MSIT) RS-2023-NR076656 and  NRF-2020R1A5A1016126.
	
	\section{A short Introduction to Quantum Mechanics}
	\label{QM}
	For the readers that are not familiar with quantum mechanics we want to give a short and basic introduction of the fundamental concepts.
	
	First, we have to introduce the notion of \textit{observable}: An observable is something that we can measure (observe) in a given physical system, like energy, position, or angular momentum. In classical mechanics we usually have $C^\infty\left(T^*Q\right)$ as the space of observables, containing the Hamiltonian function $H$, the position coordinates $q_i$, momentum coordinates $p_i$, etc. The value of the observables depend on the \textit{state} of the physical system. For a point mass in a Hamiltonian system that follows a trajectory $\gamma$ the state at a given time $t_0$ is simply $\gamma(t_0)$, i.e. its state at any given time can be completely described by the position and momentum observables. The process of 'measuring' a given observable $f\in C^\infty\left(T^*Q\right)$ when the physical system is in the state $\gamma(t_0)=z_0$ is described by the evaluation map
	\[
	\epsilon_{z_0}: C^\infty\left(T^*Q\right) \to \mathbb{R}; \quad f\mapsto f(z_0).
	\]
	
	In quantum mechanics the observables are linear operators on a separable Hilbert space. Note that in physics textbooks the precise mathematical description of these operators is usually avoided and instead they treat these objector more intuitively. Giving a mathematically precise definition usually requires the introduction of densely defined self adjoint operators and a lot of additional back ground in functional analysis. So to keep this introduction short and basic we will stick more to the physics textbook approach. 
	
	To find the right analogue for an observable that we know from classical mechanics (like energy, position, etc.) in the quantum mechanical setting one uses a process called \textit{quantization}. Again to keep this section short and basic we will describe the process of quantization in a very rudimental way: The idea is that quantum and classical mechanics are quite different, but still there should exist some kind of connection between them, since classic mechanics is just a limit case of quantum mechanics for large scales. One way physicists found to relate those two worlds is called \textit{canonical quantization}. Given a classical observable like the Hamiltonian function 
	\[
	H:\mathbb{R}^2\to\mathbb{R}; \quad (q,p)\mapsto\frac{p^2}{2}+V(q),
	\]
	where the position space is for simplicity just $\mathbb{R}$, one turns it into the corresponding quantum mechanical energy observable $\hat{H} \in \mathscr{L}\left(L^2(\mathbb{R},\mathbb{C})\right)$ by substituting the momentum coordinate $p$ by the differential operator
	\[
	-i\hbar\frac{\D}{\D x},
	\] 
	where $\hbar$ is a positive constant, and the position dependent function $V$ by the linear operator
	\[
	\hat{V}: L^2(\mathbb{R},\mathbb{C}) \to L^2(\mathbb{R},\mathbb{C}); \quad \psi \mapsto V\cdot \psi.
	\]
	Note that both of these fundamental operators are in general not even well-defined on all of $L^2(\mathbb{R},\mathbb{C})$, but we chose to ignore this for now. Performing these substitutions for the above Hamiltonian $H$ yields the \textit{Hamiltonian operator}
	\[
	\hat{H}:= -\frac{\hbar^2}{2}\frac{\D^2}{\D x^2}+ \hat{V}.
	\]
	The state of a quantum mechanical system (for position space $\mathbb{R}$) is a function $\psi\in L^2(\mathbb{R},\mathbb{C})$ with $\|\psi\|=1$ called the \textit{wave function}. Measurements in quantum mechanics are fundamentally different to classical mechanics in that even theoretically perfect measurements don't need to give the same measurements every time. Hence, we can only talk about the expectation value of a given observable for a given state. This is given by
	\[
	\langle\psi,\hat{A}\psi\rangle\,
	\]
	where this inner product is the standard scalar product of $L^2(\mathbb{R},\mathbb{C})$. The statistical variance of the measurements can be calculated via
	\begin{equation}
		\langle\psi,\hat{A}^2\psi\rangle-\langle\psi,\hat{A}\psi\rangle^2.
		\label{var}
	\end{equation}
	
	One very important concept in quantum mechanics are the eigenstates for a given observable $\hat{A}$, i.e. a state $\phi \in L^2(\mathbb{R},\mathbb{C})$ such that
	\[
	\hat{A}\phi = \lambda \phi
	\]
	for $\lambda\in\mathbb{R}$. The reason why these states are so special is the fact that when we measure the observable $\hat{A}$ in a system that is in the eigenstate $\phi$ the result is always $\lambda$ (c.f. equation \eqref{var}). Further, given an arbitrary state $\psi$ we can determine the probability that the measurement of $\hat{A}$ for the system in state $\psi$ is equal to $\lambda$ by calculating
	\[
	|\langle \phi,\psi\rangle|^2.
	\]
	The most visible display of the importance of eigenstates is the relation between energy eigenstates of electrons in a given atom or molecule and its corresponding light emission spectrum: For simplicity consider a hydrogen atom, which consists only of one proton and one electron. The classical Hamiltonian for the electron is the ordinary Kepler Hamiltonian
	\[
	H(q,p)=\frac{\|p\|^2}{2}-\frac{1}{\|q\|},
	\] 
	where we as usually set all physical constants equal to $1$. After quantization the corresponding Hamiltonian operator is given by
	\[
	\left(\hat{H}\psi\right)(x):=-\frac{1}{2}\Delta\psi(x) -\frac{1}{\|x\|}\psi(x).
	\] 
	When 'exciting' the electron in the hydrogen atom, i.e. giving it more energy, the electron will emit the additional energy again in form of a photon. But the energy of this photon can only be equal to the difference of two energy eigenvalues of $\hat{H}$. Note that this model of the hydrogen atom is only an approximation and the real emission spectrum shows several small deviations from the energy eigenvalues of $\hat{H}$. Nevertheless it illustrates the impotence of energy eigenstates and should serve as a motivation to study energy eigenstates in more details.
	
	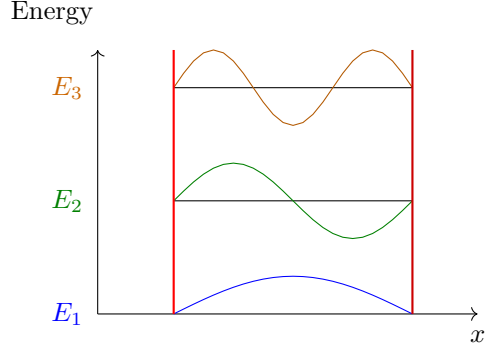
\begin{figure}
		\begin{center}
			\begin{tikzpicture}[domain=0:3.1415]
				\draw[->] (-1,0)--(4,0);
				\draw (4,-0.3) node {$x$};
				\draw[->] (-1,0)--(-1,3.5);
				\draw[thin] (0,1.5)--(3.1415,1.5);
				\draw[thin] (0,3)--(3.1415,3);
				\draw (-1.6,4) node {Energy};
				\draw[blue]   plot (\x,{0.5*sin((\x r))}); 
				\draw[green!50!black]   plot (\x,{0.5*sin(2*(\x r))+1.5}); 
				\draw[orange!70!black]   plot (\x,{0.5*sin(3*(\x r))+3}); 
				\draw[red,thick] (0,0)--(0,3.5);
				\draw[red!80!black,thick] (3.1415,0)--(3.1415,3.5);
				\draw[blue] (-1.4,0) node {$E_1$};
				\draw[green!50!black] (-1.4,1.5) node {$E_2$};
				\draw[orange!80!black] (-1.4,3) node {$E_3$};
			\end{tikzpicture}
		\end{center}
		\caption{A sketch of the different energy eigenstates for the 'free particle in the box'}
	\end{figure}
	
	The easiest system one can consider to study its energy eigenstates is the famous 'free particle in a box'. In Physics this system is usually described as a free particle in one dimension trapped in a finite area between two infinitely high potential walls. Inside this finite area the Hamiltonian operator has the simple form
	\[
		 -\frac{\hbar^2}{2}\frac{\D^2}{\D x^2}.
	\] 
	So for a box that starts at $x=0$ and ends at $x=\pi$ the energy eigenstates are given by
	\[
		\psi_n(x):= A\sin(n\pi x)
	\]
	for $x$ inside the box and zero outside and $A$ a constant in $\mathbb{C}$. Note that the 'infinite potential' at the boundary of the box forces the wave function to be zero from this point on. The corresponding energy eigenvalues are $E_n=\frac{n^2\pi^2 \hbar^2}{2}$.

	The 'free particle on a ring' is a variation of the above setting. Instead of confining the free particle to an interval one considers $S^1$ as configuration space. The boundary condition in this case requires the wave functions to be periodic with respect to the chosen ring. 
	
	For the setting considered in this paper we added additional complexity to these simple quantum systems by considering a position dependent potential in $\mathbb{R}^2$ and embedding the box or the ring respectively into this plane. Hence, the particle can no longer be described as a free particle on the confined space, which makes finding the energy eigenstates much more challenging. 
	
	\section{Time dependent Rabinowitz Floer Homology}
	So far Rabinowitz Floer homology (RFH) was only considered for time independent Hamiltonians, which is not surprising since its definition traditionally relies on the existence of an energy hypersurface that is of contact type. However, because the setting in which we want to use a time dependent version of RFH is very simple (just $\mathbb{R}^4$ with a time dependent harmonic oscillator (c.f. \eqref{mainH}), we will be able to prove the usual well-definedness result without the existence of a contact hypersurface.
	\subsection{Periodic RFH}
	\label{periodiccase}
	For a given time dependent Hamiltonian $H:S^1\times\mathbb{R}^{2n} \to \mathbb{R}$ on the symplectic manifold $\left(\mathbb{R}^{2n},\omega\right)$ we can define the Rabinowitz action functional analogue to the time independent case:
	\begin{align*}
		\mathcal{A}_{H_t}:= \int\limits_{S^1} x^*\lambda - \tau\int\limits_{S^1} H_t(x(t))\mathrm{d}t,
	\end{align*}
	where $\lambda$ is a one form such that $\mathrm{d}\lambda=\omega$. The corresponding critical point equation then reads
	\begin{align*}
		\frac{\mathrm{d}}{\mathrm{d}t} x(t)=& \tau X_{H_t}(x(t))\\
		\int\limits_{S^1}H_t(x(t))\mathrm{d}t=&0
	\end{align*}
	This means for a critical point $(x,\tau)$ the rescaled loop $x\left(\frac{t}{\tau}\right)$ is a Hamiltonian trajectory with period $\tau$ of the Hamiltonian $H_{\frac{t}{\tau}}$. In a given mechanical system it is usually very uncommon that we can choose the period of the time dependent Hamiltonian to our liking. Nevertheless, for the question we set out to answer in this paper, there is still a lot  we can achieve regardless of this draw back. If one would instead consider the action functional
	\begin{align*}
		\tilde{\mathcal{A}}_{H_t}:= \int\limits_{S^1} x^*\lambda - \tau\int\limits_{S^1} H_{\tau t}(x(t))\mathrm{d}t,
	\end{align*}
	the loop $x\left(\frac{t}{\tau}\right)$ would be a trajectory of $H_t$, i.e. the period of the Hamiltonian does not vary for different critical points. However, the corresponding gradient flow equation for this functional
	\begin{align*}
		\del_su(s,t)=\del_t u(s,t) -\eta(s)H_{\eta(s)t}(u(s,t))
	\end{align*}
	will not necessary lead to the usual Floer cylinders, which poses serious questions on the well-definedness of such a Floer homology. Therefore we chose to work with the former functional. 
	
	Since in the original definition of RFH the compactness of the moduli space of Floer cylinders relies strongly on the existence of a contact hypersurface (c.f. \cite{cieliebak2009a}) the main task of this chapter is finding a proof that (in a less general setting) works without the contact condition. First, we make the following observation: Let $(x,\tau)$ be a critical point, then we calculate 
	\begin{align*}
		\mathcal{A}_{H_t}= \int\limits_{S^1} x^*\lambda - \tau\int\limits_{S^1} H_t(x(t))\mathrm{d}t= \int\limits_{S^1}\lambda_{x(t)}\left(\tau X_{H_t}(x(t))\right)\mathrm{d}t.
	\end{align*} 
	Further, note that
	\begin{align*}
		-\D H_t=\iota_{X_{H_t}}\omega=\iota_{X_{H_t}}\D\lambda=-\D\iota_{X_{H_t}}\lambda+\Lie_{X_{H_t}}\lambda.
	\end{align*}
	Hence, if we can find a one form $\lambda$ with $\D\lambda=\omega$ that is invariant under the Hamiltonian flow $\Phi^t_{H_t}$, then we can deduce that
	\[
	-\D H_t=-\D \left(\iota_{X_{H_t}}\lambda\right)
	\]
	which implies
	\[
	\iota_{X_{H_t}}\lambda=H_t+c_0(t)
	\]
	for a function $c_0:S^1\to \mathbb{R}$ that only depends on time. Overall we get for a critical point $(x,\tau)$
	\begin{align*}
		\mathcal{A}_{H_t}= \int\limits_{S^1}\lambda_{x(t)}\left(\tau X_{H_t}(x(t))\right)\mathrm{d}t= \int\limits_{S^1}\tau\cdot\left(\ H_t(x(t))+c_0(t)\right)\mathrm{d}t= 0+\tau\int\limits_{S^1}c_0(t)\mathrm{d}t=:\tau\cdot\hat{c}_0,
	\end{align*}
	where $\hat{c}_0$ is a constant. So if this constant $\hat{c}_0\ne0$ we can still relate the action value of a critical point to its period (see \cite[Lemma~3.3.]{cieliebak2009a}). We summarize our findings in the following lemma:
	\begin{lemma}
		Let $H^\chi_t(x):=\chi(t)\cdot H_t(x)$ where $\chi$ satisfies $\int\lim\limits_{0}^1\chi(t)\D t=1$ and the flow of $H$ leaves the one form $\lambda$ with $\D\lambda=\omega$ invariant. If $(x,\tau)\in\mathrm{crit}\left(\mathcal{A}_{H_t}\right)$ we have
		\begin{equation}
			\mathcal{A}_{H^\chi_t}(x,\tau)=\tau\cdot \hat{c}_0
			\label{action-period}
		\end{equation}
		\label{Lemma:action-period}
	\end{lemma}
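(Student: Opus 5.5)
The argument is the computation preceding the lemma, now carried out for $H^\chi_t=\chi(t)H_t$. The plan is to first check that the invariance hypothesis passes from $H$ to $H^\chi$, then evaluate the action on a critical point. I read the hypothesis $(x,\tau)\in\mathrm{crit}(\mathcal{A}_{H_t})$ as criticality for the functional built from $H^\chi$, consistent with the left-hand side of \eqref{action-period}. Then the critical point equations are $\dot x(t)=\tau X_{H^\chi_t}(x(t))$ and $\int_0^1 H^\chi_t(x(t))\,\D t=0$.

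\textbf{Step 1: invariance for $H^\chi$.} Since $X_{H^\chi_t}=\chi(t)X_{H_t}$, the vector field at each fixed $t$ is a multiple of $X_{H_t}$. Invariance of $\lambda$ under the flow of $H$ means $\Lie_{X_{H_t}}\lambda=0$ for each $t$. Hence $\Lie_{X_{H^\chi_t}}\lambda=\chi(t)\Lie_{X_{H_t}}\lambda=0$. Cartan's formula, exactly as in the text, then gives $\D(\iota_{X_{H_t}}\lambda-H_t)=0$. Since $\mathbb{R}^{2n}$ is connected, $\iota_{X_{H_t}}\lambda=H_t+c_0(t)$ for some function $c_0$ of $t$ alone. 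Multiplying by $\chi$ yields $\iota_{X_{H^\chi_t}}\lambda=H^\chi_t+\chi(t)c_0(t)$.

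\textbf{Step 2: evaluation on a critical point.} Using $\dot x=\tau X_{H^\chi_t}(x)$, I rewrite $\int x^*\lambda=\int_0^1\lambda_{x(t)}(\tau X_{H^\chi_t}(x(t)))\,\D t$. Inserting Step 1 gives
\[
\mathcal{A}_{H^\chi_t}(x,\tau)=\tau\int_0^1 H^\chi_t(x)\,\D t+\tau\int_0^1\chi c_0\,\D t-\tau\int_0^1 H^\chi_t(x)\,\D t.
\]
The two Hamiltonian terms cancel, leaving $\mathcal{A}_{H^\chi_t}(x,\tau)=\tau\hat c_0$ with $\hat c_0:=\int_0^1\chi(t)c_0(t)\,\D t$.

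\textbf{Main obstacle.} The only delicate point is the meaning of "$\hat c_0$" and of "the flow leaves $\lambda$ invariant". In the intended application $c_0$ is constant. For example, with $\lambda=\tfrac12(p\,\D q-q\,\D p)$ and a quadratic Hamiltonian $H_t$ as in \eqref{mainH} with the constant $-c$, one gets $\iota_{X_{H_t}}\lambda=H_t+c$. Then the normalization $\int\chi=1$ gives $\hat c_0=c_0$, independent of $\chi$. I would state this explicitly, and note that in general $\hat c_0$ is the $\chi$-weighted average of $c_0$, which is still a constant independent of the critical point.
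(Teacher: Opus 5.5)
Your proof is correct and follows the paper's argument. Invariance of $\lambda$ plus Cartan's formula gives $\iota_{X_{H_t}}\lambda=H_t+c_0(t)$, and substituting $\dot x=\tau X$ into $\int x^*\lambda$ cancels the Hamiltonian terms, leaving $\tau\hat c_0$. Your explicit handling of $\chi$ (reading criticality with respect to $\mathcal{A}_{H^\chi_t}$, and taking $\hat c_0=\int_0^1\chi(t)c_0(t)\,\mathrm{d}t$, which equals $c_0$ when $c_0$ is constant) fills in what the paper leaves to its remark that adding $\chi$ does not affect the proof.
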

	Note that the addition of $\chi(t)$ does not influence the proof we discussed above.
	\begin{example}
		\label{example:action-period}
		Given $\mathbb{R}^{2n}$ with its canonical symplectic form. On it consider the Hamiltonian
		\[
		H_t(q,p)=\frac{\|p\|^2}{2}+V(t)\frac{\|q\|^2}{2}-c.
		\]
		The corresponding Hamiltonian vector field is
		\[
		X_{H_t}(q,p)=\begin{pmatrix}p\\-V(t)q\end{pmatrix}
		\]
		and as one form we take
		\[
		\lambda=\frac{1}{2}\left(p\D q-q\D p\right).
		\]
		We calculate that
		\begin{align*}
			\Lie_{X_{H_t}}\lambda=\iota_{X_{H_t}}\D\lambda+\D\left(\iota_{X_{H_t}}\lambda\right)=-\D H_t +\D\left(\frac{\|p\|^2}{2}+V(t)\frac{\|q\|^2}{2}\right)=0,
		\end{align*}
		i.e. $\lambda$ is invariant under the Hamiltonian flow. Alternatively, we can directly compute:
		\[
		\iota_{X_{H_t}}\lambda= \frac{\|p\|^2}{2}+V(t)\frac{\|q\|^2}{2}= H(q,p)+c
		\]
	\end{example}
	
	Different form the time independent case we now first have to establish the $L_\infty$ bound on the gradient flow lines to be able to prove a proposition equivalent to \cite[Proposition~3.2.]{cieliebak2009a}. As usually the uniform bound for gradient flow lines is established via the maximum principle for (sub-)harmonic functions:
	
	From now on assume that the Hamiltonian vector field $X_{H_t}$ has support in the compact set $K$. Hence, on $\mathbb{R}^{2n}\setminus K$ the gradient flow equation of $\mathcal{A}_{H_t}$ becomes
	\[
	\del_s u=-J\del_t u
	\]
	for an almost complex structure $J$. Now consider the projection of the gradient flow line $u$ to the radial direction
	\[
	\pi_r(u(s,t))=\langle e_r,u(s,t)\rangle,
	\]
	where $e_r$ is the unit radial basis vector for some appropriately chosen hyperspherical coordinates on $\mathbb{R}^{2n}$. Then:
	\begin{align*}
		\del_s^2\langle e_r,u(s,t)\rangle 
		=& \langle e_r,\del_s^2u(s,t)\rangle\\
		=& \langle e_r,-\del_s J\del_t u(s,t)\rangle\\
		=& \langle e_r,-J\del_t\del_s u(s,t)\rangle\\
		=& \langle e_r,J^2\del_t^2 u(s,t)\rangle\\
		=& -\del_t^2 \langle e_r, u(s,t)\rangle
	\end{align*}
	Hence,
	\[
	\left(\del_s^2+\del_t^2\right) \langle e_r, u(s,t)\rangle=0,
	\]
	i.e. $\pi_r(u(s,t))$ is a harmonic function. By the maximum principle this implies that no gradient flow line can leave the closed ball $B_{R_K}\supset K$. This proves the uniform bound on the gradient flow lines. 
	
	Now we turn our attention to the Lagrange multiplier: Our strategy to prove the existence of a uniform bound for the Lagrange multiplier still follows closely the original paper on RFH \cite{cieliebak2009a}. Hence, the next step is to find a proof of the following proposition, that is applicable in our slightly different setting.
	\begin{proposition}
		Let $H_t^\chi$ be compactly supported in the compact set $K\subset \mathbb{R}^{2n}$ with all the assumptions from Lemma~\ref{Lemma:action-period} with the additional requirement that the constant $\hat{c}_0$ from equation \eqref{action-period} is not zero. Then there exists for every $\epsilon>0$ an $\alpha <\infty$ such that the following implication holds:
		\[
		\|\nabla \mathcal{A}_{H^\chi_t}(v,\eta)\|\le \epsilon \quad \Rightarrow\quad |\eta| \le \alpha\left(\left| \mathcal{A}_{H^\chi_t}(v,\eta)+1\right|\right)
		\]
		for $(v,\eta)$ part of any gradient flow line, i.e. for a gradient flow line $(u,\xi)$ there exists an $s_0\in\mathbb{R}$ such that $(v,\eta)=(u(s_0,\ \cdot\ ),\xi(s_0))$.
		\label{etaestimate}
	\end{proposition}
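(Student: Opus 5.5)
The proof should be simpler than the original argument in \cite[Proposition~3.2]{cieliebak2009a}. We do not argue by contradiction with sequences of near-critical points. Instead we use the pointwise identity behind Lemma~\ref{Lemma:action-period} and get an explicit estimate, as follows.

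\textbf{Step 1 (identity for the action).} Since $X_{H^\chi_t}=\chi(t)X_{H_t}$ and, by the invariance of $\lambda$, $\iota_{X_{H_t}}\lambda=H_t+c_0(t)$, we have $\lambda(X_{H^\chi_t})=H^\chi_t+\chi(t)c_0(t)$. For an arbitrary loop $v$ and any $\eta\in\mathbb{R}$ we then write
\begin{align*}
\mathcal{A}_{H^\chi_t}(v,\eta)
&=\int_{S^1}\lambda_{v}\bigl(\partial_t v-\eta X_{H^\chi_t}(v)\bigr)\,\mathrm{d}t+\eta\int_{S^1}\Bigl(\lambda_v\bigl(X_{H^\chi_t}(v)\bigr)-H^\chi_t(v)\Bigr)\mathrm{d}t\\
&=\int_{S^1}\lambda_{v}\bigl(\partial_t v-\eta X_{H^\chi_t}(v)\bigr)\,\mathrm{d}t+\eta\,\hat c_0 .
\end{align*}
Here $\hat c_0=\int_{S^1}\chi c_0\,\mathrm{d}t$ is the constant from \eqref{action-period}, which is nonzero by assumption. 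This is exactly the computation of Lemma~\ref{Lemma:action-period}, now without assuming criticality.

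\textbf{Step 2 (bound the error term).} The loop component of the gradient is $\nabla_v\mathcal{A}=-J(\partial_t v-\eta X_{H^\chi_t}(v))$. With $J$ compatible, and uniformly so on the relevant region, this gives $\|\partial_t v-\eta X_{H^\chi_t}(v)\|_{L^2}\le C_J\|\nabla\mathcal{A}_{H^\chi_t}(v,\eta)\|\le C_J\epsilon$. Since $(v,\eta)$ lies on a gradient flow line, the maximum principle argument above gives $v(S^1)\subset B_{R_K}$. Therefore $\|\lambda_{v(t)}\|\le C_\lambda$ uniformly; for $\lambda=\tfrac12(p\,\mathrm{d}q-q\,\mathrm{d}p)$ we have $C_\lambda=R_K/2$. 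Cauchy--Schwarz then bounds the first term in Step 1 by $C_\lambda C_J\epsilon$.

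\textbf{Step 3 (conclusion).} Combining the two steps gives
\[
|\eta|\le\frac{|\mathcal{A}_{H^\chi_t}(v,\eta)|+C_\lambda C_J\epsilon}{|\hat c_0|}.
\]
So $\alpha:=\max\{1,C_\lambda C_J\epsilon\}/|\hat c_0|$ works, giving $|\eta|\le\alpha\left(|\mathcal{A}_{H^\chi_t}(v,\eta)|+1\right)$.

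\textbf{Main obstacle.} The delicate point is Step 1: we need $\iota_{X_{H_t}}\lambda=H_t+c_0(t)$ to hold with the same function $c_0$ at every point that $v$ visits. When $H$ is cut off to have compact support in $K$, invariance of $\lambda$ may fail in the transition region, and $c_0$ may differ inside and outside $K$.

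I would handle this as follows. First, try to arrange the cutoff so that $K$ contains the region where the relevant orbits live, and the identity holds on all of $B_{R_K}$. Concretely, in Example~\ref{example:action-period} one would modify $H$ only outside a large ball that the flow lines never reach, using the $L_\infty$ bound. If that is not possible, fall back on bounding the discrepancy $\lambda(X_{H^\chi_t})-H^\chi_t-\chi c_0$ on $B_{R_K}$ by a constant $D$. This would cost an extra term $|\eta|D$, which is harmless only if $D<|\hat c_0|$. So the cleanest route is to ensure the identity holds exactly on the region reached by the flow lines.
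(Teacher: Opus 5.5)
Your Steps 1--3 are exactly the paper's proof: rewrite $\mathcal{A}_{H^\chi_t}(v,\eta)$ as $\eta\hat c_0+\int_{S^1}\lambda_v\bigl(\partial_t v-\eta X_{H^\chi_t}(v)\bigr)\,\mathrm{d}t$, bound the error term by $\sup|\lambda|$ times the $L^2$-norm of the loop part of the gradient, and rearrange with $\alpha=\max\{1,c_\lambda\epsilon\}/|\hat c_0|$. Your use of the maximum principle to control $|\lambda|$ along $v$ is a slight tightening of the paper's $\max_K|\lambda|$.

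Your ``main obstacle'' does not affect the statement as posed, because the hypotheses of Lemma~\ref{Lemma:action-period} give invariance of $\lambda$ on all of $\mathbb{R}^{2n}$. It is, however, a fair worry about applying the proposition to the cut-off Hamiltonian \eqref{cutoff}: outside $K$ the identity reads $0=H_t+c_0(t)$, and at a nonzero extremum of the compactly supported function $H_t+c_0(t)=\lambda(X_{H_t})$ one has $X_{H_t}=0$, a contradiction. So global invariance actually rules out any nonzero compactly supported $X_{H_t}$.
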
 
	\begin{proof}
		Given an $\epsilon$ and assume that $\|\nabla \mathcal{A}_{H^\chi_t}(v,\eta)\|\le \epsilon$ for a pair $(v,\eta)$ that is part of a gradient flow line. This implies that 
		\begin{equation}
			\|\del_t v-\eta X_{H_t^\chi}(v)\|_{L^2}\le \epsilon
			\label{smallgradient}
		\end{equation}
		and 
		\[
		\left| \int\limits_{S^1} {H_t^\chi}(v(t))\text{d}t\right|\le \epsilon
		\]
		by the definition of the gradient of $\mathcal{A}_{H^\chi_t}$. We can write
		\begin{align*}
			\left|\mathcal{A}_{H^\chi_t}(v,\eta)\right|=& \left|\int\limits_{S^1}\lambda(v)\del_tv\mathrm{d}t	-\eta \int\limits_{S^1} {H_t^\chi}(v(t))\text{d}t\right| \\
			=& \left|\eta\int\limits_{S^1}\lambda(v)X_{H^\chi_t}(v)\mathrm{d}t+ \int\limits_{S^1}\lambda(v)\left(\del_tv-\eta X_{H^\chi_t}(v)\right)\mathrm{d}t -	\eta \int\limits_{S^1} {H_t^\chi}(v(t))\text{d}t\right|.
		\end{align*}
		Doing the same calculations as for the proof of Lemma~\ref{Lemma:action-period} for the first component we can get
		\begin{align*}
			\left|\mathcal{A}_{H^\chi_t}(v,\eta)\right| 
			=&\left|\eta\int\limits_{S^1}{H^\chi_t}(v(t))+c_0(t)\mathrm{d}t+ \int\limits_{S^1}\lambda(v)\left(\del_tv-\eta X_{H^\chi_t}(v)\right)\mathrm{d}t -	\eta \int\limits_{S^1} {H_t^\chi}(v(t))\text{d}t\right|\\
			=&\left|\eta\int\limits_{S^1}c_0(t)\mathrm{d}t+ \int\limits_{S^1}\lambda(v)\left(\del_tv-\eta X_{H^\chi_t}(v)\right)\mathrm{d}t\right|
		\end{align*}
		Using the reverse triangle inequality and the H\"older inequality we can estimate:
		\begin{align*}
			\left| \mathcal{A}_{H^\chi_t}(v,\eta)\right|\ge & |\eta|\cdot |c_0|-\max\limits_{x\in K}|\lambda(x)|\cdot \left\|\del_tv-\eta X_{H^\chi_t}(v)\right\|_{L^1}\\
			\ge& |\eta|\cdot |c_0|-\max\limits_{x\in K}|\lambda(x)|\cdot \left\|\del_tv-\eta X_{H^\chi_t}(v)\right\|_{L^2}
		\end{align*}
		The inequality from the beginning of the proof \eqref{smallgradient} then allows us to further estimate 
		\[
		\left| \mathcal{A}_{H^\chi_t}(v,\eta)\right|\ge |\eta|\cdot |c_0|- c_\lambda\cdot\epsilon,
		\]
		where we denote the maximum of $|\lambda(x)|$  on $K$ by the constant $c_\lambda$. If the constant $c_0$ is not zero, we can rearrange this inequality to
		\[
		|\eta|\le \frac{1}{|c_0|}\left|\mathcal{A}_{H^\chi_t}(v,\eta)\right|
		+\frac{c_\lambda\cdot \epsilon}{|c_0|}.
		\]
		By choosing
		\[
		\alpha:=\max\left\{\frac{1}{|c_0|}, \frac{c_\lambda\cdot \epsilon}{|c_0|}\right\}
		\]
		we prove the claim of the proposition:
		\[	
		|\eta|\le \alpha\cdot \left(\left|\mathcal{A}_{H^\chi_t}(v,\eta)\right|+1\right)
		\]
	\end{proof}
	Note that the constants $c_0$ and $c_\lambda$ used in the proof above change continuously along a given family $H^\sigma_t$ $\sigma\in [0,1]$ of Hamiltonians satisfying the condition of Proposition~ \ref{etaestimate}. So by taking the maximum over the entire family of constants
	\[
	\alpha:=\max\limits_{\sigma\in [0,1]}\left\{\frac{1}{|c_0|}, \frac{c_\lambda\cdot \epsilon}{|c_0|}\right\}
	\]
	we can extent the implication
	\[
	\|\nabla \mathcal{A}_{H^{\sigma,\chi}_t}(v,\eta)\|\le \epsilon \quad \Rightarrow\quad |\eta| \le \alpha\left(\left| \mathcal{A}_{H^{\sigma,\chi}_t}(v,\eta)+1\right|\right)
	\]
	to the whole family with a uniform $\alpha$. 
	
	With this implication at our hand we can now prove the uniform bound for the Lagrange multiplier (c.f. \cite[Corollary~3.5]{cieliebak2009a}).
	\begin{corollary}
		Assume that $w:=(u,\xi)\in C^\infty\left(\mathbb{R}\times S^1,\mathbb{R}^{2n}\right)$ is a (negative) gradient flow line of $\mathcal{A}_{H^\chi_t}$, where we additionally require
		\[
		a\le \mathcal{A}_{H^\chi_t}(w(s))\le b \qquad \forall s\in\mathbb{R}
		\]
		for $a,b\in\mathbb{R}$. If further $H_t^\chi$ satisfies all the requirements of Proposition~\ref{etaestimate}, then the $L_\infty$-norm of $\eta$ is uniformly bounded in terms of a constant which only depends on $a$ and $b$.
	\end{corollary}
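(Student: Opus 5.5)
We follow the argument of \cite[Corollary~3.5]{cieliebak2009a}, using Proposition~\ref{etaestimate} as the local estimate. Throughout, $w=(u,\xi)$ is a negative gradient flow line, and $\eta$ in the statement means the Lagrange multiplier component $\xi$. Fix $\epsilon>0$ and let $\alpha=\alpha(\epsilon)$ be the constant from Proposition~\ref{etaestimate}. The first ingredient is the energy identity along gradient flow lines:
\[
\int_{-\infty}^{\infty}\|\nabla\mathcal{A}_{H^\chi_t}(w(s))\|^2\,\D s=\lim_{s\to-\infty}\mathcal{A}_{H^\chi_t}(w(s))-\lim_{s\to\infty}\mathcal{A}_{H^\chi_t}(w(s))\le b-a .
\]
This holds because $\frac{\D}{\D s}\mathcal{A}_{H^\chi_t}(w(s))=-\|\nabla\mathcal{A}_{H^\chi_t}(w(s))\|^2$. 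The same computation works on any subinterval.

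Fix $s\in\mathbb{R}$. We split into two cases.

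\emph{Case 1:} $\|\nabla\mathcal{A}_{H^\chi_t}(w(s))\|\le\epsilon$. Proposition~\ref{etaestimate} gives directly
\[
|\xi(s)|\le\alpha\bigl(\max\{|a|,|b|\}+1\bigr).
\]

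\emph{Case 2:} the gradient at $s$ is large. Let $\sigma(s)=\inf\{\sigma\ge0:\|\nabla\mathcal{A}_{H^\chi_t}(w(s+\sigma))\|\le\epsilon\}$. Since $\|\nabla\mathcal{A}\|>\epsilon$ on $[s,s+\sigma(s))$, the energy identity gives
\[
\epsilon^2\sigma(s)\le b-a,\qquad\text{so}\qquad\sigma(s)\le\frac{b-a}{\epsilon^2}.
\]
In particular $\sigma(s)$ is finite. The point $s+\sigma(s)$ then falls under Case 1, so $|\xi(s+\sigma(s))|\le\alpha(\max\{|a|,|b|\}+1)$.

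To transport this bound back to $s$, integrate $\partial_s\xi$:
\[
|\xi(s)-\xi(s+\sigma(s))|\le\int_s^{s+\sigma(s)}|\partial_s\xi|\,\D r\le\int_s^{s+\sigma(s)}\|\nabla\mathcal{A}_{H^\chi_t}(w(r))\|\,\D r .
\]
This uses that $\partial_s\xi=\int_{S^1}H^\chi_t(u)\,\D t$ is a component of the gradient. By Cauchy--Schwarz the right-hand side is at most $\sqrt{\sigma(s)}\sqrt{b-a}\le(b-a)/\epsilon$. Hence
\[
\|\xi\|_{L^\infty}\le\alpha\bigl(\max\{|a|,|b|\}+1\bigr)+\frac{b-a}{\epsilon},
\]
which depends only on $a$, $b$ and the fixed choice of $\epsilon$.

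There is an alternative way to bound $|\partial_s\xi|$ that avoids the Cauchy--Schwarz step. The uniform $L_\infty$ bound on $u$ from the maximum principle shows that $\partial_s\xi=\int_{S^1}H^\chi_t(u)\,\D t$ is bounded by $\max_{S^1\times B_{R_K}}|H^\chi_t|$. Multiplying by $\sigma(s)\le(b-a)/\epsilon^2$ gives a bound of the same form.

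The main obstacle I expect is verifying that Proposition~\ref{etaestimate} applies at $s+\sigma(s)$ with a uniform $\alpha$. Its proof uses $\max_K|\lambda|$, which is legitimate only if $u(s,\cdot)$ stays inside a fixed compact ball. This is exactly what the harmonic-function argument shows: $u$ remains in $B_{R_K}$. I would therefore state explicitly that $c_\lambda$ is taken as $\max_{B_{R_K}}|\lambda|$.
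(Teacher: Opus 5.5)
Your proof is correct and is essentially the paper's argument: the same $\sigma(s)$, the same energy bound $\sigma(s)\le (b-a)/\epsilon^2$, Proposition~\ref{etaestimate} applied at $s+\sigma(s)$, and integration of $\partial_s\xi$ back to $s$. The only deviation is that you bound $\int_s^{s+\sigma(s)}|\partial_s\xi|$ by Cauchy--Schwarz to get $(b-a)/\epsilon$, whereas the paper uses $\|H^\chi_t\|_\infty\,\sigma(s)$, which is your stated alternative; your remark that $c_\lambda$ should really be $\max_{B_{R_K}}|\lambda|$ is a fair clarification of the proposition's proof, not a gap in yours.
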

	\begin{proof}
		Fix an $\epsilon>0$. For $s\in\mathbb{R}$ let $\sigma(s)\ge 0$ be defined by 
		\[
		\sigma(s):=\inf\left\{\sigma\ge 0\ :\ \left\|\nabla \mathcal{A}_{H^{\chi}_t}(w(s+\sigma))\right\|<\epsilon\right\}
		\]
		Claim:
		\begin{equation}
			\sigma(s)\le \frac{b-a}{\epsilon^2} \qquad \forall s\in \mathbb{R}
			\label{sigmaestimate}
		\end{equation}
		To see this we estimate
		\begin{align*}
			b-a\ge& \lim\limits_{s\to-\infty}\mathcal{A}_{H^\chi_t}(w(s))-\lim\limits_{s\to\infty}\mathcal{A}_{H^\chi_t}(w(s))\\
			=& \int\limits_{-\infty}^\infty \left\|\nabla \mathcal{A}_{H^{\chi}_t}(w(s))\right\|^2\D s\\
			\ge& \int\limits_{s}^{s+\sigma(s)} \left\|\nabla \mathcal{A}_{H^{\chi}_t}(w(s))\right\|^2\D s,
		\end{align*}
		for any choice of $s\in\mathbb{R}$. By the definition of $\sigma(s)$ the norm of the gradient $\left\|\nabla\mathcal{A}_{H^{\chi}_t}(w(s))\right\|$ is bigger equal $\epsilon$ on the open interval $(s,s+\sigma(s))$. Hence, we can conclude 
		\[
		b-a\ge \sigma(s)\cdot\epsilon^2 \qquad \forall s\in\mathbb{R}.
		\]
		This proves the claim. 
		
		Next, define $M:=\max\left\{|a|,|b|\right\}$. Utilizing Proposition~\ref{etaestimate} and the definition of $\sigma(s)$ we can prove the existence of a constant $c_M$ such that 
		\begin{equation}
			\begin{aligned}
				\left|\xi(s+\sigma(s))\right| \le& \alpha\left(\left| \mathcal{A}_{H^\chi_t}(u,\xi)+1\right|\right)\\
				\le& \alpha(M+1)=:c_M.
			\end{aligned}
			\label{c_M}
		\end{equation}
		Note that since $\alpha$ is independent of the specific gradient flow line, so is $c_M$. Equation \eqref{sigmaestimate} and \eqref{c_M} allow us now to prove the claim of the corollary:
		\begin{align*}
			|\xi(s)|\le& |\xi(s+\xi(s))|+\int\limits_{s}^{s+\sigma} \left|\del_s\xi(\hat{s})\right|\D\hat{s}\\
			=&|\xi(s+\sigma(s))|+ \int\limits_{s}^{s+\sigma} \left|\del_s\xi(\hat{s})\right|\D\hat{s}\\
			\le& c_M+\left\|H_t^\chi\right\|_\infty\cdot\sigma(s)\\
			\le& c_M+\frac{\left\|H_t^\chi\right\|_\infty\cdot(b-a)}{\epsilon^2}
		\end{align*}
		Note that the second row follows from the fact that $(u,\xi)$ is a gradient flow line. Since the right hand side is independent of $s$ we get 
		\[
		\|\xi\|_\infty\le c_M+\frac{\left\|H_t^\chi\right\|_\infty\cdot(b-a)}{\epsilon^2}
		\]
	\end{proof}
	
	The last part of proving the compactness of the moduli space of gradient flow lines is to show that the derivatives of the loop $u(s)$ are bounded. But since we work on $\mathbb{R}^{2n}$ the symplectic form is always exact and therefore we can rely on the usual proof via the non existence of nonconstant $J$-holomorphic spheres.
	
	What we proved so far shows that in our time dependent case we have a well-defined RFH. In the next step we want to show that this new type of RFH is also invariant under the homotopy of $H$: So given a homotopy 
	\[
	H^\sigma_t \quad \sigma\in [0,1].
	\]
	The typical way construct to a continuation isomorphism between the corresponding RFH's is by considering gradient flow lines where the corresponding Hamiltonian changes along the flow following the given homotopy, i.e.
	\[
	\del w(s)=\nabla \mathcal{A}_{H^{\chi,s}_t}(w(s)).
	\]
	Note that in the previous discussion we established that for any $\epsilon>0$ the implication
	\[
	\|\nabla \mathcal{A}_{H^{\sigma,\chi}_t}(v,\eta)\|\le \epsilon \quad \Rightarrow\quad |\eta| \le \alpha\left(\left| \mathcal{A}_{H^{\sigma,\chi}_t}(v,\eta)+1\right|\right)
	\]
	where $\alpha$ is independent of $\sigma\in[0,1]$. Therefore one can follow the arguments in \cite{cieliebak2009a} (see especially chapter~3.2 and Theorem~3.6) to show that the continuation isomorphism is well defined.
	
	The last part in our discussion of the well-definedness of RFH in the time dependent case concerns the critical points. Do define RFH one needs the critical points to be isolated (Morse case) or at least to form closed finite dimensional manifolds (Morse Bott case). For the non constant orbits (i.e. $\tau\ne0$) we can use the standard argument from Floer homology that shows that for a generic Hamiltonian these orbits are isolated critical points. Note that in contrast to time independent Hamiltonians we don't have an $S^1$-invariant functional, hence, we can achieve isolated critical points after perturbation. In the case that $\del_t H_t=0$ we can of course fall back to \cite[Appendix~B]{cieliebak2009a}. For the constant orbits ($\tau=0$) we need to give a new argument why they form a closed submanifold, since in the time dependent case they can not be characterised as $H^{-1}(0)$ any more. But instead for
	\[
	F_{H_t}:\mathbb{R}^{2n} \to \mathbb{R}, \quad z\mapsto \int\limits_{S^1}H_t(z)\mathrm{d}t
	\]
	the set of constant critical points is given by $F^{-1}_{H_t}(0)$. The next example will show that this set is a closed manifold for the Hamiltonians we are interested in.
	\begin{example}
		\label{example:ismanifold}
		Consider the Hamiltonian
		\[
		H_t(q,p)=\frac{\|p\|^2}{2}+(E-V(t))\cdot \frac{\|q\|^2}{2}-c,
		\]
		for a $c>0$. Then a point $(q_0,p_0)$ in $ F_{H_t}^{-1}(0)$ has to satisfy
		\[
		\frac{\|p\|^2}{2}+\frac{\|q\|^2}{2}\int\limits_{S^1}(E-V(t))\D t-c=0.
		\]
		The point $(q_0,p_0)$ is a regular point if in addition
		\[
		\nabla F_{H_t}(q_0,p_0)=\left(q_0\cdot\int\limits_{S^1}(E-V(t))\D t,p_0\right)\ne (0,0).
		\]
		The following two cases can appear:
		\begin{enumerate}
			\item If $(q_0,p_0)\in\mathbb{R}^n\times\{0\}$, then 
			\[
			\frac{\|q_0\|^2}{2}\cdot\int\limits_{S^1}(E-V(t))(t)\D t-c=0.
			\]
			Since $c>0$ this implies that both $q_0$ and $\int\limits_{S^1}(E-V(t))\D t$ are non zero and therefore the gradient $\nabla F_{H_t}(q_0,p_0)$ does not vanish.
			\item If $(q_0,p_0)\notin\mathbb{R}^n\times\{0\}$, then we see immediately that $\nabla F_{H_t}(q_0,p_0)\ne(0,0)$.
		\end{enumerate}
		Therefore all elements in $F_{H_t}^{-1}(0)$ are regular points.
	
	 For a well defined RFH we further need this set to be a closed manifold. Unfortunately this is not always the case (e.g. if $(E-V(t))=-1$ for all $t$). But by requiring that 
	\[
	\int\limits_{S^1}(E-V(t))\D t>0
	\] 
	we can ensure that the manifold of critical points with $\tau=0$ is a closed manifold.
	\end{example}
	\subsection{The Case of Lagrangian Boundary Conditions}
	\label{Lagrangiancase}
	Instead of periodic Hamiltonians and periodic orbits, one can similarly consider Hamiltonians of the form 
	\[
	H:[0,1]\times\mathbb{R}^{2n} \to \mathbb{R}
	\]
	and chords starting and ending in a given Lagrangian $L\subset \mathbb{R}^{2n}$. The corresponding Rabinowitz Floer action functional is then
	\[
	\mathcal{A}_{H_t}:=\int\limits_{0}^1 x^*\lambda -\tau\int\limits_{0}^1H_t(x(t))\D t
	\]
	analogue to the one defined in Section~\ref{periodiccase}. Since $[0,1]$ is still a compact set Lemma~\ref{action-period}, Proposition~\ref{etaestimate} and all subsequent conclusions still hold true. This allows us to refer to \cite{merry2014a} for the well definedness of the Lagrangian version of time dependent RFH.  Note that here the critical points $(x,\tau)$ of $\mathcal{A}_{H_t}$ correspond to Hamiltonian chords
	\[
	\frac{\D}{\D t}x(t)=X_{H_{\frac{t}{\tau}}}(x(t))
	\]
	such that $x(0)$ and $x(\tau)$ are inside the chosen Lagrangian submanifold.

	\section{Applications to Quantum Mechanics on one Dimensional Configuration Spaces}
	The question we set out to answer is: For a given potential $\tilde{V}$ and any given energy $E$ can we always find a radius $R$ such that we can realize an energy eigenstate with energy $E$ on a ring of radius $R$ (alt. a box of length $R$)?
	
	\subsection{Periodic RFH and the 'Particle on a Ring'}
	The goal of this section is to use the time-dependent RFH we constructed in Section~\ref{periodiccase} to give a positive answer to the above question for energy eigenstates confined to a ring. First consider the Hamiltonian
	\[
	H:S^1\times \mathbb{R}^4 \to \mathbb{R},\quad (q,p)\mapsto H_t(q,p)=\frac{\|p\|^2}{2}+(E-V(t))\frac{\|q\|^2}{2}-c,
	\]
	where $V(t)$ is the restriction of the potential in the plain to $S^1$ and $E,c\in\mathbb{R}^+$ are constants. In Example~\ref{example:action-period} and Example~\ref{example:ismanifold} we already showed that this Hamiltonian satisfies almost all the conditions needed for a well-defined (time dependent) RFH. However, it has obviously no compactly supported vector field $X_{H_t}$. One way to solve this problem is by multiplying the Hamiltonian with a suitable cut off function. But this might interfere with the Hamiltonian trajectories that cross into the cut-off region. To avoid this we first show that any potential trajectory of $H_t$ with CZ-index small then a given constant $\eta_0$ has to ly in a fixed compact set $K_{\eta_0}$ and then choose the cut-off function to be constant one on $K_{\eta_0}$.
	
	If we assume that $(E-V(t))>0$ for all $t\in S^1$, then the condition 
	\[
	\int\limits_{S^1}H_t(x(t))\D t=0
	\]
	that has to be satisfied by every critical point of $\mathcal{A}_{H_t}$ implies that these trajectories can not leave the compact set
	\[
	\mathbb{R}^4\supset\Omega:=\left\{z\in\mathbb{R}^4\ :\ H_t(x)\le 0\ \text{for a } t\in S^1\right\}
	\]
	for too long. In particular, we can assume that a given periodic orbit $x$ starts at $t=0$ inside $\Omega$. Since it satisfies the Hamiltonian equation 
	\[
	\frac{\D}{\D t}x(t)=\tau X_{H_t}(x(t))=\tau \begin{pmatrix} 0&\id \\-(E-V(t))\id&0\end{pmatrix}x(t)
	\]
	the maximum speed at which the orbit can move away from $\Omega$ is determined by $\tau$, $E-V(t)$ and $\|x(t)\|$. So for a given $E$ and $V$ as well as a fixed $\tau_0$ we can estimate the minimal amount of time the orbit will need to go from the inside of $B_{R_i}(0)$ to  the complement of $B_{R_j}(0)$ for $R_j>R_i$ (c.f. Figure~\ref{speedlimit}). Now we add up these minimal times as we successively increase the radii. Since the speed of the orbit only increases linearly with distance to the origin there will be a radius $R_\mathrm{max}^{\tau_0}$ such that the orbit would need more time then $[0,1]$ to leave the corresponding ball. Hence, critical points $(x,\tau)$ with $\tau \leq \tau_0$ can not leave $B_{R_\mathrm{max}^{\tau_0}}(0)$.
	
	\begin{figure}
		\begin{center}
			\begin{tikzpicture}
				\draw[->] (-3,0)--(3,0);
				\draw[->] (0,-3)--(0,3);

				\draw[blue!70!white] (0,-0.7) to[out=160, in=210, looseness=2] (0,0.7);
				\draw[blue!70!white] (0.4,0.4) node {$\Omega$};
				\draw[blue!70!white] (0,0.7) to[out=30, in=340, looseness=2] (0,-0.7);
				\draw[green!50!black] (0,0) circle[radius=1cm];
				\draw[green!50!black] (1.,-1.1) node {$B_{R_1}(0)$};
				\draw[orange] (0,0) circle[radius=2cm];
				\draw[orange] (1.7,-1.8) node {$B_{R_2}(0)$};
				\draw[purple!50!black] (0,0) circle[radius=3cm];
				\draw[purple!50!black] (2.5,-2.6) node {$B_{R_3}(0)$};
				\draw(-0.2,0.2)--(-0.71,0.71);
				\draw[|-|] (-0.71,0.71)--(-1.414,1.414);
				\draw[|-|] (-1.414,1.414)--(-2.12,2.12);
				\draw[->](-2.12,2.12)--(-2.5,2.5);
				\draw (-0.8,1.2) node {$\Delta t_1$};
				\draw (-1.5,1.9) node {$\Delta t_2$};
				\draw (-0.2,0.2) node {$\bullet$};
			\end{tikzpicture}
		\end{center}
		\caption{Estimating the minimal time a Hamiltonian trajectory needs to go from the inside of $B_{R_i}(0)$ to the complement of $B_{R_j}(0)$.}
		\label{speedlimit}
	\end{figure}
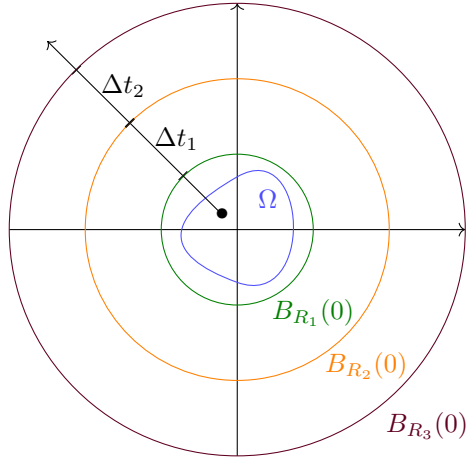
	
	Note that the Hessian of $H_t$ is a positive definite matrix at every point and its corresponding equation of motion decouples. This implies that the Conley Zehnder index (CZ-index) is strictly monotone increasing in the sense that the associated winding number is strictly monotone increasing with the period of the orbit. For more details we refer to Appendix~\ref{otto}. 
	
	Combining this with the fact that critical points $(x,\tau)$ with $\tau \leq \tau_0$ can not leave $B_{R_\mathrm{max}^{\tau_0}}(0)$ we can conclude that for a given bound on the CZ-index $\eta_0$ there exits a radius $R_{\eta_0}$ such that all orbits with CZ-index less then $\eta_0$ can't leave $B_{R_{\eta_0}}(0)$. With this we define the compactly supported Hamiltonian
	\begin{equation}
		\hat{H}^{\eta_0}_t(q,p):=\chi_{R_{\eta_0}}(q,p)\cdot\left(\frac{\|p\|^2}{2}+(E-V(t))\frac{\|q\|^2}{2}-c\right)+\left(1-\chi_{R_{\eta_0}}\right),
		\label{cutoff}
	\end{equation}
	where $\chi_{R_{\eta_0}}$ is a smooth bump function for the set $\overline{B_{R_{\eta_0}}(0)}$. Then the critical points of $\mathcal{A}_{\hat{H}^{\eta_0}_t}$ coincide with the ones of $\mathcal{A}_{H_t}$ as long as they have CZ-index smaller the $\eta_0$. 
	
	The Hamiltonian $\hat{H}^{\eta_0}_t$ fulfils all the requirements for a well defined time dependent RFH. The next step is now to compute the RFH for this Hamiltonian: First we define the homotopy
	\[
	F_s(q,p,t):=\chi_{R_{\eta_0}}(q,p)\cdot\left(\frac{\|p\|^2}{2}+(E-(1-s)V(t))\frac{\|q\|^2}{2}-c\right)+\left(1-\chi_{R_{\eta_0}}\right),
	\]
	for $s\in[0,1]$. This homotopy fulfils all the requirements for a well-defined time-dependent RFH (after a potential small perturbation) with $F_{s=0}=\hat{H}^{\eta_0}_t$ and for $s=1$ the Hamiltonian is that of a compactly supported harmonic oscillator. Note that we can always choose $R_{\eta_0}$ a bit bigger such that $F_1^{-1}(0)$ is inside the ball of radius $R_{\eta_0}$. For $F_1$ the above defined RFH for non-autonomous Hamiltonians coincides with the usual (time independent)  RFH, which is well known to be zero. By homotopy invariance we then conclude
	\[
	RFH\left(\mathbb{R}^4,\hat{H}^{\eta_0}_t\right)=0.
	\]
	Further, the constant orbits $z\in\mathbb{R}^4$ with
	\[
	\int\limits_{S^1} \hat{H}^{\eta_0}_t(z)\D t=0
	\]
	are always critical points of $\mathcal{A}_{\hat{H}^{\eta_0}_t}$ and form a three dimensional submanifold of $\mathbb{R}^4$. Assume that there exists no orbit with CZ-index smaller equal $4$, then $RFH_i\left(\mathbb{R}^4,\hat{H}^{\eta_0}_t\right)$ is well-defined for $i\in\{1,2,3,4\}$ and coincides with the singular homology of the submanifold of constant critical points. This contradiction implies that there has to exists at least on orbit with CZ-index $4$. Since the above discussion does not depend on a specific choice of $\eta_0$ we are free to choose $\eta_0=5$ in this case. This proves that for every $E-V(t)$ there exists at least one periodic solution of the Hamiltonian
	\[
	\frac{\|p\|^2}{2}+\left(E-V\left(\frac{t}{\tau}\right)\right)\frac{\|q\|^2}{2}\
	\]
	with $E-V(t)>0$ for all $t\in S^1$ and period $\tau\in\mathbb{R}^+$. 
	
	The next step is now to translate this result back into our original question about energy eigenstates on closed one dimensional manifolds. So given a trajectory $x(t)$ of the above Hamiltonian with period $\tau$. Then \[
	\psi(t):=\pr_q(x(t))
	\]
	interpreted as a number in $\mathbb{C}$ is an energy eigenstate for any given energy $E$ on a closed string $\Gamma$ of length $\tau$ and an exterior potential $\tilde{V}:\mathbb{R}^2\to\mathbb{R}$, such that
	\[
	\tilde{V}\bigg\vert_\Gamma=V\left(\frac{\argument}{\tau}\right).
	\] 
	This observation allows us now to prove Theorem~\ref{thmclosed}:
	\begin{proof}[of Theorem~\ref{thmclosed}]
		Given a radially invariant potential $\tilde{V}$ and an energy $E>\max\tilde{V}$ the Hamiltonian
		\[
		H:S^1\times \mathbb{R}^4 \to \mathbb{R},\quad (q,p)\mapsto H_t(q,p)=\frac{\|p\|^2}{2}+(E-V(t))\frac{\|q\|^2}{2}-c,
		\]
		where $V(t)$ is the restriction of $\tilde{V}$ to the unit circle in the plain, satisfies all the requirements we imposed in the above discussion. Hence, we know that there exists a periodic orbit $x(t)$ with period $\tau$ of the Hamiltonian 
		\[
		\frac{\|p\|^2}{2}+\left(E-V\left(\frac{t}{\tau}\right)\right)\frac{\|q\|^2}{2}.
		\]
		By the radial invariance of $\tilde{V}$ we know that $V\left(\frac{t}{\tau}\right)$ is equal to the restriction of $\tilde{V}$ to the circle of radius $\tau$. Renaming $\tau$ to $r_E$ to indicate the dependence on the energy we consider and taking
		\[
		\psi(t):=\pr_q(x(t))
		\]
		as the energy eigenstate on $\del B_{r_E}(0)$ yields the statement of the theorem.
	\end{proof}
	
	\subsection{Lagrangian RFH and the 'Particle in a Box'}
	\label{openstring}
	The setup is the same as in the previous section, but now we consider energy eigenstates that are confined to a straight line in $\mathbb{R}^2$. The Hamiltonian is given as  
	\[
	H:[0,1]\times \mathbb{R}^4 \to \mathbb{R},\quad (q,p)\mapsto H_t(q,p)=\frac{\|p\|^2}{2}+(E-V(t))\frac{\|q\|^2}{2}-c.
	\]
	If $E-V(t)>0$ for all $t\in[0,1]$ we can define the non-autonomous Lagrangian RFH for the modified Hamiltonian $\hat{H}_t^\eta$ (analogue to \eqref{cutoff}) and the Lagrangian $\{0\}\times\mathbb{R}^2\subset \mathbb{R}^4$. Using the same homotopy as in the previous section we again see that
	\[
	\mathrm{RFH}\left(\mathbb{R}^4,\hat{H}_t^\eta,\{0\}\times\mathbb{R}^2\right)=0,
	\]
	For the Lagrangian case the manifold of constant critical points is $F^{-1}(0)\cap \{0\}\times \mathbb{R}^2$ and, hence, one dimensional. By our convention the maximum of this manifold has index one. Again arguing by contradiction this implies that there has to exist a non constant Hamiltonian chord of $\hat{H}_t^\eta$ with index $2$ that starts and ends in $\{0\}\times\mathbb{R}^2$. To make sure that this chord is also a solution for $H_t$ we argue as follows: 
	
	Given a chord $x$ of $H_t$ that starts and ends in $\{0\}\times\mathbb{R}^2$. Then there is an anti-symplectic map 
	\[
		\zeta:\mathbb{R}^4 \to \mathbb{R}^4; \qquad (q,p)\mapsto (-q,p)
	\]
	such that that $\{0\}\times\mathbb{R}^2$ is its fixed point set and $H_t$ is invariant under its pullback. This implies that the chord
	\[
		\check{x}: [0,1]\to \mathbb{R}^4; \qquad t\mapsto \zeta(x(1-t))
	\]
	is also a Hamiltonian trajectory of $H_t$ and the concatenation $\check{x}\circ x$ is a periodic orbit of $H_t$. Note that for Lagrangian RFH the index of a trajectory is given by the Robbin-Salamon index (RS-index) for the Lagrangian path defined by applying the linearized Hamiltonian flow to the Lagrangian our chords starts and ends in. However, using the uniqueness property of the CZ-index and the concatenation property of the RS-index we can get the following relation:
	\[
		\mu_\mathrm{CZ}(\check{x}\circ x)=\mu_\mathrm{RS}(\check{x}\circ x)= 2\mu_\mathrm{RS}(x)
	\]
	So if we choose $\eta=4$ like in the previous section we know for $x$ being the index two orbit of $\hat{H}_t^\eta$ from above that $\check{x}\circ x$ can not reach the area around the cut-off and therefore is $x$ also a trajectory for $H_t$. We conclude that for every $E\in\mathbb{R}^+$ and every $V:[0,1]\to\mathbb{R}$ such that $E-V(t)>0$ for all $t\in [0,1]$ there exists a Hamiltonian trajectory $x(t)$ of the Hamiltonian
	\[
	\frac{\|p\|^2}{2}+\left(E-V\left(\frac{t}{\tau}\right)\right) \frac{\|q\|^2}{2}\
	\]
	that satisfies $x(0)$, $x(\tau)\in\{0\}\times\mathbb{R}^2$ for a $\tau\in\mathbb{R}^+$. This allows us to prove Theorem~\ref{thmopen}:
	
	\begin{proof}[of Theorem~\ref{thmopen}]
		Given a radial invariant potential $\tilde{V}:\mathbb{R}^2\to\mathbb{R}$, then for every energy $E$ with 
		\[
		E>\max\tilde{V},
		\]
		there is a solution $x(t)$ to the Hamiltonian equation for  
		\[
		\frac{\|p\|^2}{2}+\left(E-V\left(\frac{t}{\tau}\right)\right) \frac{\|q\|^2}{2}\
		\]
		which comes back to the Lagrangian submanifold $\{0\}\times\mathbb{R}^2$ at time $\tau$, where $V$ is defined to be the restriction of $\tilde{V}$ to $\hat{\Gamma}:=\{(s,1)\ :\ s\in[0,1]\}$. Again we rename $\tau$ to $l_E$ to indicate the dependence on the chosen energy. To finish the proof choose a straight line $\Gamma_{l_E}$ of length $l_E$ that starts in $(0,l_E)$ and ends in $(l_E,l_E)$. The radial invariance then implies that
		\[
		V\left(\frac{\argument}{l_E}\right)=\tilde{V}\big\vert_{\Gamma_{l_E}}.
		\]
		\begin{figure}
			\begin{center}
				\begin{tikzpicture}
					\draw[->] (-0.5,0)--(2.5,0);
					\draw[->] (0,-0.5)--(0,2.5);
					\draw[blue] (0,1)--(1,1);
					\draw[blue] (0.5,1.3) node {$\hat{\Gamma}$};
					\draw[dashed] (0,0)--(2.5,2.5);
					\draw[green!60!black] (0,2)--(2,2);
					\draw[green!60!black] (1,2.3) node {$\Gamma_{l_E}$};
					\draw (2.3,-0.3) node {$x$};
					\draw (-0.3,2.3) node {$y$};
				\end{tikzpicture}
			\end{center}
			\label{line}
			\caption{Sketch of the lines $\hat{\Gamma}$ and $\Gamma_{l_E}$ and their relation to each other.}
		\end{figure}
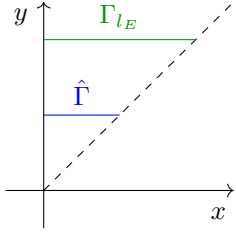
		The projection of the solution $x(t)$ onto the position space $\mathbb{R}^2$ after identifying $\mathbb{R}^2$ with $\mathbb{C}$ is an energy eigenstate confined to $\Gamma_{l_E}$.
	\end{proof}

	\appendix
	\section{Hamiltonians with monotone increasing CZ-index}
	\label{otto}
	The goal of this section is to show that for certain types of Hamiltonian flows the CZ-index grows strictly monotone with the period of the corresponding orbit. What follows is based on unpublished notes of Otto van Koert.
	
	\subsection{Conley-Zehnder Index via Winding Number}
	Given a Hamiltonian trajectory $\gamma$ in $\mathbb{R}^{2n}$. To compute its CZ-index we have to consider the linearized Hamiltonian flow along the path of $\gamma$. The result is a map
	\[
	\phi:[0,\tau]\times\mathbb{R}^{2n}\to \mathbb{R}^{2n}; \quad (t,v)\mapsto \phi(t)v,
	\]
	where $\phi(t)$ is an element in the group of symplectic matrices $Sp(2n)$ for all $t\in [0,\tau]$. To define the CZ-index as a Maslov index we have to study the intersection of the path $\phi(t)$ for $t\in [0,\tau]$ with the Maslov cycle (c.f.\cite{salamon1992a})
	\[
	\Gamma:=\left\{S\in Sp(2n)\ :\ \det(S-\id)=0 \right\}.
	\]
	Note that this set is naturally dividing $Sp(2n)$ into two parts:
	\begin{align*}
		C_+:=& \left\{S\in Sp(2n)\ :\ \det(S-\id)>0 \right\}\\
		C_-:=& \left\{S\in Sp(2n)\ :\ \det(S-\id)<0 \right\}
	\end{align*}
	To get a the right CZ-index the path in $Sp(2n)$ has to have specific start- and endpoints. Since $\phi$ is the linearization of a Hamiltonian flow it naturally starts at $\id$. The endpoint we have to choose depends on the complement of the Maslov cycle in which $\phi(\tau)$ lies. For $C_+$ we define 
	\[
	B_+:=\diag(-1,\ldots,-1)
	\]
	and for $C_-$
	\[
	B_-:=\diag(2,0.5,-1,\ldots,-1).
	\]
	So if $\phi(\tau)\in C_+$ we choose a path in $C_+$ connecting $\phi(\tau)$ to $B_+$ without intersecting $\Gamma$. Similar for $\phi(\tau)\in C_-$. 
	
	To compute the intersection number of this extended path with the Maslov cycle it is common to reinterpret this number as a winding number in $S^1\subset\mathbb{C}$. So the next step is to construct a retract from the symplectic matrices $Sp(2n)$ to the unitary matrices with complex coefficients $U\left(n,\mathbb{C}\right)$. Note that there is an inclusion of a matirx $A+iB \in U\left(n,\mathbb{C}\right)$ into $Sp(2n)$ by 
	\[
	A+iB \mapsto\begin{pmatrix}
		A&B\\ -B&A
	\end{pmatrix}.
	\]
	We shall call the image of $U\left(n,\mathbb{C}\right)$ under this inclusion by $U\left(2n,\mathbb{R}\right)$. This subgroup of $Sp(2n)$ can be characterized as
	\[
	U\left(2n,\mathbb{R}\right)=\left\{A\in Sp(2n)\ :\ JA=AJ\right\},
	\] 
	where $J$ is the standard almost complex structure on $\mathbb{R}^{2n}$. By definition every element $S$ in $Sp(2n)$ satisfies the equation
	\[
	S^\intercal J S=J.
	\]
	So if in addition $S$ is an orthogonal matrix, it satisfies
	\begin{align*}
		S^\intercal J S=&J\\
		\Leftrightarrow \qquad S\circ S^\intercal J S =& S\circ J\\
		\Leftrightarrow \qquad  J S =& SJ,
	\end{align*}
	i.e. it is in $U\left(2n,\mathbb{R}\right)$. Therefore we define the following retract:
	\[
	\tilde{\varrho}: Sp(2n) \to U\left(2n,\mathbb{R}\right), \quad S\mapsto\left(S S^\intercal \right)^{-\frac{1}{2}} S
	\]
	Note that $S S^\intercal$ is a product of symplectic matrices, hence, also a symplectic matrix. Further, the square root of a symplectic positive definite matrix is again symplectic, as is the inverse of a symplectic matrix. Overall $\left(S S^\intercal \right)^{-\frac{1}{2}} S$ is again in $Sp(2n)$, i.e. the above map is well-defined. Now consider the identification
	\[
	\iota:U\left(2n,\mathbb{R}\right) \to U\left(n,\mathbb{C}\right), \quad 	 \begin{pmatrix}
		A&B\\ -B&A
	\end{pmatrix}\mapsto A+iB
	\]
	and define the retract
	\[
	\varrho:=\tilde{\varrho}\circ\iota: Sp(2n) \to U\left(n,\mathbb{C}\right).
	\]
	Observe that $\varrho\left(B_+\right)=-\id$ and $\varrho\left(B_-\right)=\id$. So if we consider the linearized flow $\phi$ extended to a path of symplectic matrices that ends in $B_+$ or $B_-$ - we shall call this extension by $\hat{\phi}$ - one should consider $\varrho\left(\hat{\phi}\right)^2$ to get a loop of matrices. 
	\begin{definition}
		Given a Hamiltonian trajectory $\gamma$ in $\mathbb{R}^{2n}$ and let $\hat{\phi}$ be the linearized Hamiltonian flow along $\gamma$ extended to a point $B_{+/-}$. The CZ-index of $\gamma$ is then defined to be the winding number of the loop
		\[
		\det\left(\varrho\left(\hat{\phi}\right)^2\right)
		\]
		in $S^1\subset\mathbb{C}$.
	\end{definition}
	
	\subsection{On the Growth of the CZ-index for Special Trajectories in $\mathbb{R}^{2n}$}
	Given a Hamiltonian system in $\mathbb{R}^2$ the linearized flow $\phi$ takes on a particular simple form since it is only a $2\times2$ matrix. This allows us to give an simple formula for the map $\varrho$ and study the behaviour of the corresponding CZ-index very explicitly.
	
	First, remember the following formula for the square root of a $2\times 2$ positive matrix $A$:
	\[
	\sqrt{A}= \frac{1}{\sqrt{\tr(A)+2\sqrt{\det(A)}}}\left(A+\sqrt{\det(A)}\id\right)
	\] 
	Note that since the linearized flow $\phi$ is a symplectic matrix, so in particular $\det(\phi)=1$, this formula simplifies  in our case to 
	\begin{equation}
		\sqrt{\phi\phi^\intercal}= \frac{1}{\sqrt{\tr\left(\phi\phi^\intercal\right)+2}}\left(\phi\phi^\intercal+\id\right).
		\label{sqrt}
	\end{equation}
	Hence, given a linearized Hamiltonian flow of the form
	\[
	\phi=\begin{pmatrix}
		a&b\\c&d\\
	\end{pmatrix}
	\]
	we calculate
	\[
	\phi\phi^\intercal=\begin{pmatrix}
		a^2+b^2 & ac+bd\\
		ac+bd & c^2+d^2
	\end{pmatrix},
	\]
	i.e. $\tr\left(\phi\phi^\intercal\right)=a^2+b^2+c^2+d^2$. Applying the formula~\eqref{sqrt} we get
	\[
	\sqrt{\phi\phi^\intercal}= \frac{1}{\sqrt{a^2+b^2+c^2+d^2+2}} \begin{pmatrix}
		a^2+b^2+1 & ac+bd\\
		ac+bd & c^2+d^2+1
	\end{pmatrix}.
	\]
	With the help of the inversion formula for $2\times 2$ matrices one can easily calculate that
	\[
	\left(\sqrt{\phi\phi^\intercal}\right)^{-1}\phi = \frac{1}{\sqrt{a^2+b^2+c^2+d^2+2}} \begin{pmatrix}
		a+d & b-c\\
		c-b & a+d
	\end{pmatrix}.
	\]
	This is now an element in $U\left(2,\mathbb{R}\right)$. To calculate $\varrho(\phi)$ we have to identify this matrix as an element in $U\left(1,\mathbb{C}\right)\cong S^1\subset\mathbb{C}$. The final result is then given as
	\[
	\varrho(\phi)=\frac{a+d+i(b-c)}{\sqrt{(a+d)^2+(b-c)^2}}=e^{i\varphi},
	\]
	where $\varphi=\arctan\left(\frac{b-c}{a+d}\right)$. The CZ-index is then given by the winding number of $\varrho(\phi)^2=e^{i2\varphi}$ in $S^1\subset\mathbb{C}$. To understand its growth behaviour we have to look at the derivative of $\varphi$ with respect to the time variable of the Hamiltonian trajectory:
	\begin{equation}
		\dot{\varphi}=\frac{(\dot{b}-\dot{c})(a+d)-(b-c)(\dot{a}-\dot{d})}{(a+d)^2+(b-c)^2}
		\label{varphidot}
	\end{equation}
	At the beginning of this section we said that our goal is to show that for certain Hamiltonians the CZ-index is strictly monotone increasing with the period of the corresponding orbit. Since the CZ-index is only integer valued this can of course never be true in a strict sense. Instead with meant that we want to prove $\dot{\varphi}>0$ along the whole orbit. 
	
	To achieve this let us first recall some facts about the linearized flow: For a given Hamiltonian function $H$ on $\mathbb{R}^{2n}$ with the canonical symplectic form the Hamiltonian flow is defined by the equation
	\[
	\frac{\D}{\D t} \Phi_t(z)=J\nabla H\left(\Phi_t(z)\right),
	\]
	where $J$ is the matrix
	\[
	J:=\begin{pmatrix}
		0& \id\\
		-\id & 0\\
	\end{pmatrix}.
	\]
	The linearized flow along an orbits $\gamma$ is then defined as the map
	\[
	\phi: \mathbb{R}\times\mathbb{R}^{2n} \to \mathbb{R}^{2n}; \quad (t,v) \mapsto \mathrm{D} \Phi_t(\gamma(0))v.
	\]
	Hence, by differentiating the defining equation for $\Phi_t$ we see that  the linearized flow satisfies the equation
	\begin{equation}
		\frac{\D}{\D t} \phi_t = JD^2H(\gamma(t)) \phi_t,
		\label{linearflow}
	\end{equation}
	where $D^2H(\gamma(t))$ denotes the Hessian of $H$ along the orbit $\gamma$. Combining this fact together with \eqref{varphidot} yields the following proposition:
	\begin{proposition}
		Given a Hamiltonian function $H$ on $\mathbb{R}^{2n}$ with the canonical symplectic form and a periodic orbit $\gamma$ with period $\tau$. If the Hessian $D^2H(\gamma(t))$ is a positive definite diagonal matrix for all $t\in[0,\tau]$, then the CZ-index of $\gamma$ is strictly monotone increasing, meaning $\dot{\varphi(t)}$ is bigger zero for all $t\in[0,\tau]$, where the CZ-index of $\gamma$ is given by the winding number of $e^{i\varphi(t)}$.
		\label{posCZ}
	\end{proposition}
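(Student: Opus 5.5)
The plan is a direct computation in the $2\times 2$ setting of the preceding subsection, followed by a reduction of the $2n$-dimensional case to it. I would first treat $n=1$. Write $D^2H(\gamma(t))=\diag(\alpha(t),\beta(t))$ with $\alpha,\beta>0$, and let $\phi_t=\begin{pmatrix} a&b\\ c&d\end{pmatrix}$ be the linearized flow. Equation \eqref{linearflow} with $J=\begin{pmatrix}0&1\\-1&0\end{pmatrix}$ gives $JD^2H=\begin{pmatrix}0&\beta\\-\alpha&0\end{pmatrix}$, and hence
\[
\dot a=\beta c,\qquad \dot b=\beta d,\qquad \dot c=-\alpha a,\qquad \dot d=-\alpha b .
\]

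Next I would substitute these into \eqref{varphidot}. In the second term of the numerator the correct derivative of $a+d$ is $\dot a+\dot d$; this comes from differentiating $\varphi=\arctan\frac{b-c}{a+d}$. The numerator then becomes
\[
(\beta d+\alpha a)(a+d)-(b-c)(\beta c-\alpha b)=\alpha(a^2+b^2)+\beta(c^2+d^2)+(\alpha+\beta)(ad-bc).
\]
Since $\phi_t$ is symplectic, $ad-bc=\det\phi_t=1$. The numerator therefore equals $\alpha(a^2+b^2)+\beta(c^2+d^2)+\alpha+\beta$, which is strictly positive.

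For the denominator,
\[
(a+d)^2+(b-c)^2=a^2+b^2+c^2+d^2+2(ad-bc)=\tr(\phi\phi^\intercal)+2\ge 2 .
\]
So it never vanishes, $\varrho(\phi_t)$ is well defined along the whole path, and $\varphi$ admits a continuous lift. Its derivative is $\dot\varphi>0$ for all $t\in[0,\tau]$. Consequently $e^{2i\varphi}$ winds strictly positively, and its winding over $[0,\tau]$ is nondecreasing in $\tau$.

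For general $n$, I would order coordinates as $(q_1,p_1,\dots,q_n,p_n)$. A diagonal Hessian makes $JD^2H$ block-diagonal with $2\times 2$ blocks of the form above. Hence $\phi_t=\bigoplus_j\phi^{(j)}_t$ with each $\phi^{(j)}_t\in Sp(2)$. The retraction $\varrho$ respects this splitting, because the polar decomposition of a block-diagonal matrix is block-diagonal. Therefore
\[
\det\varrho(\phi_t)=\prod_j e^{i\varphi_j(t)},
\]
and the total angle $\sum_j\varphi_j$ has derivative $\sum_j\dot\varphi_j>0$ by the $n=1$ case.

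The main obstacle is not the algebra. It is justifying that the monotonicity of $\varphi$ along $[0,\tau]$ controls the CZ-index. The extension $\hat\phi$ to $B_\pm$ contributes only a bounded correction, since that path stays in $C_\pm$ and so does not cross the Maslov cycle. I would argue this boundedness from the definition of $\hat\phi$, and conclude that the index grows with the accumulated angle.
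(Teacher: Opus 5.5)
Your proposal is correct and follows the paper's proof essentially step for step. You use the same entrywise system $\dot a=\beta c$, $\dot b=\beta d$, $\dot c=-\alpha a$, $\dot d=-\alpha b$, the same use of $ad-bc=1$ to turn the numerator into $\alpha(a^2+b^2+1)+\beta(c^2+d^2+1)>0$, and the same block-diagonal splitting in the ordering $(q_1,p_1,\dots,q_n,p_n)$ for general $n$. You also correctly use $\dot a+\dot d$ and include the factor $J$, which the paper's displays \eqref{varphidot} and \eqref{ODEa1} drop by typo; the paper's final expression agrees with yours. Your bound $(a+d)^2+(b-c)^2=\tr(\phi\phi^\intercal)+2\ge 2$ is a harmless addition. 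Your closing remark about the extension to $B_\pm$ goes beyond the statement, which defines monotonicity as $\dot\varphi>0$, and the paper does not argue it either.
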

	\begin{proof}
		First consider the case $n=1$: Let us abbreviate $D^2H(\gamma(t)) = D_t$.
		Then we can deduce from \eqref{linearflow}
		\begin{equation}
			\frac{\D}{\D t} \phi_t = D_t \phi_t,
			\label{ODEa1}
		\end{equation}
		Further, when we denote
		\[
		\phi_t=\begin{pmatrix}
			a&b\\
			c&d
		\end{pmatrix},
		\]
		for $a$, $b$, $c$ and $d$ functions in $t$, then we can calculate the growth of the CZ-index following equation \eqref{varphidot}. For this we write equation~\eqref{ODEa1} more explicitly as
		\[
		\begin{pmatrix}
			\dot{a}&\dot{b}\\
			\dot{c}&\dot{d}
		\end{pmatrix}= 
		\begin{pmatrix}
			D_{22}c&D_{22}d\\
			-D_{11}a&-D_{11}b
		\end{pmatrix},
		\]
		where $D_{11}$ and $D_{22}$ are the diagonal elements of $D_t$ that by assumption are positive. This allows us to rewrite the numerator of \eqref{varphidot} in the following way:
		\begin{align*}
			(\dot{b}-\dot{c})(a+d)-(b-c)(\dot{a}-\dot{d})=D_{11}\left(a^2+b^2+1\right)+D_{22}\left(c^2+d^2+1\right)>0
		\end{align*}
		This proves the claim for $n=1$.
		
		Now assume we have   
		\[
		D^2H(\gamma(t)) = D_t.
		\]
		for $n$ arbitrary. By choosing a basis of the form $(q_1,p_1,\ldots,q_n,p_n)$ equation \eqref{linearflow} can be rewritten as 
		\[
		\frac{\D}{\D t}\phi_t=\begin{pmatrix}
			J_2&&0\\
			&\ddots& \\
			0&&J_2
		\end{pmatrix}
		\begin{pmatrix}
			D_1(t)&&0\\
			&\ddots& \\
			0&&D_n(t)
		\end{pmatrix} {\phi}_t,
		\]
		where $J_2$ is the two dimensional standard almost complex structure and $D_i(t)$ is a two dimensional diagonal matrix with only positive entries. This implies that ${\phi}_t$ also has a block diagonal structure
		\[
		{\phi}_t=\begin{pmatrix}
			B_1(t)&&0\\
			&\ddots& \\
			0&&B_n(t)
		\end{pmatrix}
		\]
		with $B_i(t)$ being two dimensional matrices that fulfil the same properties as the linearized flow in the case $n=1$. Therefore, applying the retract $\varrho$ to ${\phi}_t$ results in
		\[
		\varrho\left({\phi}_t\right)=
		\begin{pmatrix}
			e^{i\varphi_1}&&0\\
			&\ddots& \\
			0&&e^{i\varphi_n}
		\end{pmatrix}
		\] 
		and finally
		\[
		\det\left(\varrho\left({\phi}_t\right)\right) = e^{i2(\varphi_1+\ldots+\varphi_n)},
		\]
		where $\dot{\varphi}_i>0$ for all $i\in\{1,\ldots,n\}$ according to the discussion for the case $n=1$. This proves the proposition.
	\end{proof}
	\begin{example}
		Consider on $\mathbb{R}^{2n}$ the time dependent harmonic oscillator
		\[
		H(q,p)=\frac{\|p\|^2}{2}+\frac{k(t)}{2}\|q\|^2.
		\]
		with $k(t)>0$ for all $t$. The corresponding Hessian is then
		\[
		D^2H=\begin{pmatrix}
			1&0&&&0\\
			0&k(t)&&&\\
			&&\ddots&&& \\
			&&&1&0\\
			0&&&0&k(t)
		\end{pmatrix}.
		\]
		From this it is easy to see that $D^2H$ is this case satisfies all the conditions from Proposition~\ref{posCZ} and therefore has strictly increasing CZ-index.
		\label{posex}
	\end{example}
	
	\bibliographystyle{alpha}
	\bibliography{Kbib}	
	
\end{document}